\definecolor{Red}{rgb}{0.7,0,0.1}
\definecolor{Green}{rgb}{0,0.7,0}
\def\url@leostyle{%
 \@ifundefined{selectfont}{\def\UrlFont{\sf}}{\def\UrlFont{\scriptsize\ttfamily}}} \makeatother\urlstyle{leo}
\newtheorem{theorem}{Theorem}
\newtheorem{lemma}[theorem]{Lemma}
\newtheorem{corollary}[theorem]{Corollary}
\theoremstyle{definition}
\theoremstyle{remark}
\newtheorem{remark}[theorem]{Remark}
\numberwithin{equation}{section}
\numberwithin{theorem}{section}
\def\cH{\mathcal{H}}
\def\cM{\mathcal{M}}
\def\bR{\mathbb{R}}
\title{Oscillations and integrability of the vorticity in the 3D NS flows}
\author{Y. Do$^1$ \and A. Farhat$^2$ \and Z. Gruji\'c$^1$ \and L. Xu$^1$}
\date{%
       $^1$University of Virginia, \ $^2$Florida State University
       \\[2ex] %
\today
}
\begin{document}

\maketitle

\begin{abstract}

\bigskip

\noindent In the studies of the Navier-Stokes (NS) regularity problem, it has become increasingly clear that a more realistic path to improved \emph{a priori} bounds is to try to break away from the scaling of the energy-level estimates in the realm of the blow-up-type arguments (the solution in view is regular/smooth up to the possible blow-up time) rather than to try to improve the regularity of Leray's weak solutions. The present article is a contribution in this direction; more precisely, it is shown--in the context of an algebraic/polynomial-type blow-up profile of \emph{arbitrary degree}--that a very weak condition on the \emph{vorticity direction} field (membership in a local $bmo$ space weighted with \emph{arbitrary many} logarithms)--suffices to break the energy-level scaling in the bounds on the vorticity. At the same time, the obtained bounds transform a 3D NS criticality scenario depicted by the macro-scale long vortex filaments into a no-singularity scenario.

\bigskip

{\noindent \small
{\it \bf Keywords:} $L\ln^{(k)} L$ Estimates; Vortex Stretching Term; Weighted BMO spaces; Pointwise Multipliers in BMO.}
\end{abstract}

\section{Introduction}

The 3D Navier-Stokes (NS) equations read
\[
 u_t + (u \cdot \nabla) u =  \nu \triangle u - \nabla p + f,
\]
supplemented with the incompressibility condition $ \ \mbox{div} \ u = 0$. A vector field $u$ denotes the velocity of
the fluid, a scalar field $p$ the pressure, a positive constant $\nu$ the viscosity, and a vector field $f$ the external
force. In what follows, for simplicity, the viscosity is set to $1$, $f$ taken to be a potential force and the spatial domain
to be the whole space $\mathbb{R}^3$.

\medskip

Taking the curl of the $(u, p)$ formulation yields the equations for the vorticity of the fluid $\omega$ ($\omega = \ \mbox{curl} \ u$),
\[
 \omega_t + (u \cdot \nabla) \omega = \triangle \omega + (\omega \cdot \nabla) u.
\]
(Since we are on $\mathbb{R}^3$, the velocity can be recovered from the vorticity by the Biot-Savart law, and one
obtains a closed system for $\omega$.) In this setting, the only piece of data is the initial condition.

\medskip

The left-hand side represents the transport of the vorticity by the velocity, the first term on the right-hand side is the diffusion,
and the second term is the vortex-stretching term. The vortex-stretching term $ (\omega \cdot \nabla) u$ is responsible for
amplification of the vorticity magnitude in 3D NS flows and possible formation of singularities (it is identically zero in 2D).

\medskip

The NS regularity problem is super-critical, i.e., there is a scaling gap between any regularity class/criterion and
the corresponding \emph{a priori} bound (with respect to the unique scaling leaving the NS equations invariant).
Moreover, the scaling gap has been of the fixed size; all the regularity classes are (at best) scaling-invariant, while all the
\emph{a priori} bounds have been on the scaling level of the Leray's energy class $L^\infty(0, T; L^2)$. Somewhat
unexpectedly, a very recent work
(\citet{Bradshaw2017}) introduced a mathematical framework based on the suitably defined scale of spatial
intermittency/sparseness of the regions of intense vorticity--and motivated by the computational simulations of turbulent
flows--in which the regularity class is still scaling invariant, but the \emph{a priori} bound is algebraically/polynomially
better than the energy-level bound (this is in the context of a blow-up-type argument; i.e., one studies an initially smooth
solution in
the vicinity of the first possible blow-up time).

\medskip

There are two main classical approaches in the study of the NS regularity problem; one is to try to improve the regularity of the
Leray's weak solutions (or some other class of weak solutions), and the other one is to study the initially smooth/regular flows
approaching a possible singularity (a blow-up-type argument). Given that there is an increasing evidence that  Leray's
solutions may not be unique (see, for example, a very recent work of \citet{Buckmaster2017}), and the fact that regular solutions are
necessarily unique, the second avenue seems more realistic. In addition, the aforementioned reduction
of the scaling gap (\cite{Bradshaw2017}) took place within the realm of the blow-up-type arguments.

\medskip

The main goal of this article is to show that--in the context of a blow-up-type argument--one can escape the energy-level
scaling of the vorticity \emph{a priori} bounds under a very weak condition on the oscillations of the vorticity direction
field $\xi$. We were motivated by the work of \citet{Bradshaw2015} where it was shown that as long as $\xi$
is uniformly-in-time bounded in the logarithmically weighted space of the functions of bounded mean oscillations
$\displaystyle{\widetilde{bmo}_\frac{1}{|log r|}}$, the vorticity magnitude stays bounded in the Zygmund space
$L \log L$ breaking the energy-level scaling $L^1$. A notable feature of this result is that the space
$\displaystyle{\widetilde{bmo}_\frac{1}{|log r|}}$ allows for discontinuities (e.g., discontinuities of
$\sin \log |\log r|$-type). At the same time, the question arose of whether it was possible to break the energy-level scaling
in the vorticity bounds under a weaker assumption on $\xi$ (allowing for stronger discontinuities), or whether there is a
natural obstruction and the space
$\displaystyle{\widetilde{bmo}_\frac{1}{|log r|}}$ is optimal within this framework.

\medskip

Unfortunately, there is an obstruction. Namely, one of the main ingredients in \cite{Bradshaw2015} is the
Coifman-Rochberg $BMO$-estimate on the logarithm of the maximal function (\cite{Coifman1980}) which has no analogue in the weighted
$BMO$-spaces. In particular, in the case where the weight is a $k$-fold $\log$-composite (the case of the most interest
for applications to the 3D NSE), several types of
counterexamples are presented in the Appendix. It turned out that the obstruction disappears if one restricts the
considerations to the case where the function of interest (its magnitude) exhibits an algebraic/polynomial-type singularity
(there is no restriction on the degree/strength of the singularity). Fortunately, in the realm of the 3D NSE, an
algebraic/polynomial type of the blow-up profile (for the vorticity magnitude) is consistent with the current
NS theory, and--in this context--it is indeed possible to generalize the result from \cite{Bradshaw2015}, i.e., replace
the original $\xi$-condition $\displaystyle{\widetilde{bmo}_\frac{1}{|log r|}}$ with the class of $\xi$-conditions
\[
 \widetilde{bmo}_\frac{1}{\ln\ln\ldots\ln|\ln r|}
\]
where the number of the $\log$-composites can be an arbitrary positive integer, and still obtain the bound on the
vorticity magnitude escaping the
energy-level scaling.

\medskip

In addition, the estimates on the vorticity magnitude obtained here yield improved estimates on the distribution function
of the vorticity sufficient to transform a geometric criticality scenario presented in \citet{Dascaliuc2012} into a regularity
scenario.

\medskip

The paper is organized as follows. Section 2 presents several results on weighted local $BMO$s and maximal functions;
in particular, a `dynamic' generalization of the Coifman-Rochberg estimate in the setting of the algebraic/polynomial-type
singularity is presented. Section 3 contains the main result.

\section{Results on Weighted BMOs and Maximal Functions}

In this section, we list and prove some results in harmonic analysis that will be applied in the next section. Henceforth,
for a set $S$ and a function $f$, $f(S)$ will denote the average value of $f$ on $S$, $f(S)=\fint_s f = \frac{1}{|S|} \int_S f$.

\begin{lemma}\label{le:WBMO-Multiplier}
Let $\varphi$ be a suitable weight function and $\delta>0$ sufficiently small (depending on $\varphi$).
For any $f\in L^1$ define
\begin{align*}
\|f\|_{\widetilde{bmo}_\varphi}:=\|f\|_{L^1}+\underset{|Q|<\delta}{\sup}\ \frac{1}{\varphi(r)}\fint_Q\left|f(x)-f(Q)\right|dx.
\end{align*}
Suppose $\phi: (0,\infty)\to [0,\infty)$ is increasing near 0, and let $\phi_*$ be defined by
\begin{align}\label{eq:phistar}
\phi_*(t)=\int_{\min(\delta, t)}^{\delta}\frac{\phi(s)}{s}ds.
\end{align}
Then
\begin{align}\label{eq:PtwMulti}
\|fg\|_{\widetilde{bmo}}\lesssim \|f\|_{\widetilde{bmo}_\phi}\left(\|g\|_\infty+\|g\|_{bmo_{(\phi_*)^{-1}}}\right).
\end{align}
(Here, $\widetilde{bmo} = \widetilde{bmo}_1$.)
\end{lemma}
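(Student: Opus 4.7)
The plan is to establish the multiplier inequality by decomposing $fg-\fint_Q fg$ on a cube $Q$ of side $r<\delta$ into three pieces that can each be paired with one of the norms on the right-hand side. Writing
\[
 f g - \fint_Q fg \;=\; (f-f(Q))(g-g(Q)) \;-\; \fint_Q (f-f(Q))(g-g(Q)) \;+\; f(Q)\,(g-g(Q)) \;+\; g(Q)\,(f-f(Q)),
\]
I would bound the averaged absolute value on $Q$ term by term. The $L^1$ part of the target norm $\widetilde{bmo}=\widetilde{bmo}_1$ is immediate, since $\|fg\|_{L^1}\le \|g\|_\infty\|f\|_{L^1}\le \|g\|_\infty\|f\|_{\widetilde{bmo}_\phi}$.

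For the pieces containing $g-g(Q)$ or $f-f(Q)$ that are \emph{already} paired with the natural norm, the estimates are straightforward: the cross term is absorbed using $|g-g(Q)|\le 2\|g\|_\infty$ together with $\fint_Q|f-f(Q)|\le \phi(r)\|f\|_{\widetilde{bmo}_\phi}\lesssim \|f\|_{\widetilde{bmo}_\phi}$ (since $\phi$ is bounded on $(0,\delta]$ up to choosing $\delta$ appropriately), and the last term $g(Q)(f-f(Q))$ is handled identically, contributing $\|g\|_\infty\|f\|_{\widetilde{bmo}_\phi}$.

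The main point — and the main obstacle — is the middle term $f(Q)(g-g(Q))$: one must control the \emph{average} $|f(Q)|$ on a small cube in terms of $\phi_*(r)$ so that it is annihilated by the weight $(\phi_*)^{-1}(r)$ coming from $\|g\|_{bmo_{(\phi_*)^{-1}}}$. For this I would run a dyadic chaining argument. Fix a cube $Q_0$ of side $\delta$ containing $Q$, and let $Q=Q_N\subset Q_{N-1}\subset\cdots\subset Q_0$ be a chain of cubes with $|Q_{k}|\sim 2^{3} |Q_{k+1}|$; then
\[
 |f(Q)|\;\le\;|f(Q_0)| + \sum_{k=0}^{N-1}|f(Q_{k+1})-f(Q_k)|\;\lesssim\;\|f\|_{L^1} + \|f\|_{\widetilde{bmo}_\phi}\sum_{k=0}^{N-1}\phi(2^{k}r),
\]
and the geometric sum is comparable to $\int_r^\delta \phi(s)/s\,ds=\phi_*(r)$, yielding
\[
 |f(Q)|\;\lesssim\;\|f\|_{\widetilde{bmo}_\phi}\bigl(1+\phi_*(r)\bigr)\;\lesssim\;\|f\|_{\widetilde{bmo}_\phi}\,\phi_*(r)
\]
for $r$ small (using that $\phi_*(r)\to\infty$ as $r\to0$, which is the regime where the estimate is needed). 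Multiplying by $\fint_Q|g-g(Q)|\le (\phi_*)^{-1}(r)\,\|g\|_{bmo_{(\phi_*)^{-1}}}$ cancels the $\phi_*(r)$ factor exactly, producing the desired bound $\|f\|_{\widetilde{bmo}_\phi}\|g\|_{bmo_{(\phi_*)^{-1}}}$.

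The delicate step is the chaining estimate: one has to verify that dyadic parent cubes are admissible in the definition of $\widetilde{bmo}_\phi$ (so that their side lengths stay below $\delta$), that the increments $|f(Q_{k+1})-f(Q_k)|$ are controlled by $\phi$ of the \emph{larger} scale (using $|f(Q_{k+1})-f(Q_k)|\le \fint_{Q_{k+1}}|f-f(Q_k)|\lesssim \fint_{Q_k}|f-f(Q_k)|$), and that the geometric sum is genuinely equivalent to the integral definition \eqref{eq:phistar} of $\phi_*$ under the monotonicity of $\phi$ near $0$. Once these points are in place the three contributions combine to give \eqref{eq:PtwMulti}.
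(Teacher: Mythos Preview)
Your proposal is correct and follows essentially the same approach as the paper: decompose the oscillation of $fg$ on a small cube, pair the $f$-oscillation with $\|g\|_\infty$, and pair $|f(Q)|$ with the $g$-oscillation using the key bound $|f(Q)|\lesssim \phi_*(r)\|f\|_{\widetilde{bmo}_\phi}$. The only difference is cosmetic---the paper cites Janson for this last bound while you reproduce the dyadic chaining argument explicitly, and your four-term decomposition is a harmless variant of the paper's two-term one.
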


\begin{proof}
We slightly modify the proof of Theorem~2 in \citet{Janson1976} as follows. Let $Q=Q(x_0,r)$ be any cube
with center $x_0$ and side length $r<\frac{\delta}{2}$. Then, as shown in \cite{Janson1976},
\begin{align*}
\left|f(Q)\right|\lesssim \|f\|_{\widetilde{bmo}_\phi}\int_{r}^\delta\frac{\phi(t)}{t}dt\lesssim \phi_*(r)\|f\|_{\widetilde{bmo}_\phi}.
\end{align*}
The following string of inequalities is then easily verified.
\begin{align*}
(fg)(Q) &\lesssim \fint_{Q(x_0,r)}|g|\left|f-f(Q)\right|dx + \fint_{Q(x_0,r)}|f|\left|g-g(Q)\right|dx
\\
&\lesssim \|g\|_\infty \fint_{Q(x_0,r)}\left|f-f(Q)\right|dx + |f(Q)|\fint_{Q(x_0,r)}\left|g-g(Q)\right|dx
\\
&\lesssim \phi(r)\|f\|_{\widetilde{bmo}_\phi}\|g\|_\infty +\|f\|_{\widetilde{bmo}_\phi}\phi_*(r)\fint_{Q(x_0,r)}\left|g-g(Q)\right|dx
\\
&\lesssim \|f\|_{\widetilde{bmo}_\phi}\|g\|_\infty + \|f\|_{\widetilde{bmo}_\phi}\|g\|_{bmo_{(\phi_*)^{-1}}}.
\end{align*}
\end{proof}

\bigskip

\begin{lemma}\label{le:Diff-MaxFun}
For an $0<r<1$ and a function $f$ define the following incarnation of the maximal operator:
$M_r(f)=\left(M\left(|f|^r\right)\right)^{1/r}$ where the usual maximal operator is defined as
\begin{align}\label{eq:DefMaxFunc}
M(f)(x)=\underset{x\in Q}{\sup}\frac{1}{|Q|}\int_Q |f(y)|dy
\end{align}
where the supremum is taken over all cubes containing $x$. Let $\{f_t\}$ be a family of functions satisfying
\begin{enumerate}
\item $\{f_t\}\subset C^1(\bR^n\setminus\{0\})$
\item $\underset{t}{\sup}\ |\nabla f_t|\lesssim |x|^{-\ell}$
\item $\underset{t}{\inf}\ |f_t|\gtrsim 1$.
\end{enumerate}
Then there exists $r\in (0,1)$ such that $\nabla M_r(f_t)$ exists almost everywhere and
\begin{align*}
\underset{t}{\sup}\ |\nabla M_r(f_t)|\lesssim |x|^{-k}
\end{align*}
where $k$ depends only on $r$ and $\ell$.
\end{lemma}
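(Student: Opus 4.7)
The plan is to reduce the gradient estimate on $M_r(f_t)$ to one on the standard maximal function $M(g_t)$, where $g_t := |f_t|^r$, via the chain rule
\[
\nabla M_r(f_t) \;=\; \tfrac{1}{r}\, M_r(f_t)^{1-r}\, \nabla M(g_t),
\]
followed by a Kinnunen-type pointwise inequality $|\nabla Mg|(x)\lesssim M(|\nabla g|)(x)$ a.e., applied to $g=g_t$. Hypothesis~(3) is what makes this reduction fruitful: combined with $r<1$ it keeps $|f_t|^{r-1}$ bounded above by a constant, so the gradient $|\nabla g_t|=r|f_t|^{r-1}|\nabla f_t|$ inherits from hypothesis~(2) only the singular bound $|y|^{-\ell}$, and no worse.

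In detail, I would proceed as follows. First, $M(|\nabla g_t|)(x) \lesssim M(|y|^{-\ell})(x) \lesssim |x|^{-\ell}$; the second inequality is the classical radial computation, valid when $\ell<n$ so that $|y|^{-\ell}$ is locally integrable at the origin. Combined with the Kinnunen inequality this yields $|\nabla M(g_t)|(x)\lesssim |x|^{-\ell}$. Second, I would obtain a pointwise upper bound on $|f_t|$ by integrating hypothesis~(2) along a radial path: for $\ell>1$ this gives $|f_t(x)|\lesssim|x|^{1-\ell}$ near the origin, hence $M(g_t)(x)\lesssim|x|^{r(1-\ell)}$ (provided $r(\ell-1)<n$, so that $|y|^{r(1-\ell)}$ is locally integrable), which in turn yields $M_r(f_t)(x)^{1-r}\lesssim|x|^{(1-\ell)(1-r)}$. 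For $\ell\le 1$ the function $f_t$ is locally bounded and this step is trivial. Combining everything,
\[
|\nabla M_r(f_t)|(x) \;\lesssim\; |x|^{(1-\ell)(1-r)-\ell} \;=\; |x|^{-k}, \qquad k=2\ell-1-r(\ell-1),
\]
which depends only on $r$ and $\ell$ and is uniform in $t$. The constraint $r<n/(\ell-1)$ for $\ell>1$ is precisely what forces the existential quantifier on $r$ in the statement.

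The main technical obstacle is the rigorous justification of the Kinnunen step in this low-regularity setting. The classical theorem $|\nabla Mg|\le M(|\nabla g|)$ a.e.\ is stated for $g\in W^{1,p}(\bR^n)$ with $p>1$, whereas $g_t=|f_t|^r$ need not belong to any global Sobolev space (no control is given on $f_t$ at infinity). The standard fix is to work locally: the pointwise bounds $g_t\lesssim|y|^{r(1-\ell)}$ and $|\nabla g_t|\lesssim|y|^{-\ell}$ show that $g_t\in W^{1,p}_{\mathrm{loc}}$ for $p\in(1,n/\ell)$, after which one localizes or truncates before applying Kinnunen. A more elementary alternative, should the Sobolev route become awkward, is to estimate $M_r(f_t)(x)-M_r(f_t)(y)$ directly by replacing each cube $Q\ni x$ by its translate $Q+(y-x)\ni y$ and invoking the mean-value bound $|a^{1/r}-b^{1/r}|\lesssim\min(a,b)^{1/r-1}|a-b|$; hypothesis~(3) ensures $\min(a,b)\gtrsim 1$ by the Lebesgue differentiation theorem, and hypothesis~(2) controls $|a-b|$ through the gradient, bypassing any Sobolev machinery.
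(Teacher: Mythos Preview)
Your Kinnunen approach is clean and works when $\ell<n$, but the lemma must cover arbitrary $\ell>0$ (in the application to Theorem~\ref{th:LogkEst-WBMO} one has $n=3$ and $\ell_1$ ``arbitrarily large''). When $\ell\ge n$ the bound $|\nabla g_t|\lesssim|y|^{-\ell}$ is not locally integrable, so $M(|\nabla g_t|)\equiv+\infty$ and the pointwise inequality $|\nabla Mg_t|\le M(|\nabla g_t|)$ is vacuous. Choosing $r$ small does not help here: the exponent $\ell$ in $|\nabla g_t|=r|f_t|^{r-1}|\nabla f_t|\lesssim|x|^{-\ell}$ comes from $|\nabla f_t|$ and is unaffected by $r$, since the only available lower bound on $|f_t|$ is the constant from hypothesis~(3). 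Your ``elementary alternative'' runs into the same wall: saying that hypothesis~(2) controls $|a-b|=\big|\fint_Q g_t-\fint_{\tilde Q} g_t\big|$ ``through the gradient'' amounts to bounding $\fint_Q|\nabla g_t|$ uniformly over cubes $Q\ni x$, which is again $M(|\nabla g_t|)(x)$.

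The paper's proof is close to your alternative but supplies the missing idea for cubes near the origin. It splits according to whether $Q$ meets a small cube $2Q_0$ of side $\sim|x|$ centred at $0$. If not, the mean-value/gradient estimate you describe gives $\lesssim|x|^{-\ell}$. If $Q$ does meet $2Q_0$, then $\mathrm{diam}(Q)\gtrsim|x|$, and one estimates $\int_{Q\Delta\tilde Q}g_t$ directly (no gradient): the piece outside $Q_0$ uses $g_t\lesssim|x|^{-m}$, while the piece inside $Q_0$ is a thin slab of thickness $|x-y|$ on which $\int|z|^{-m}$ is controlled by an $(n-1)$--dimensional integral, finite precisely because one has chosen $r$ so small that $m=r(\ell-1)<n-1$. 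This is where the existential quantifier on $r$ is really used; your constraint $r(\ell-1)<n$ arises only at the level of $M_r(f_t)^{1-r}$ and is not what drives the argument.
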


\begin{proof}
Let $f$ be a member $\{f_t\}$. Note that $\nabla M_r(f)$ exists almost everywhere because the maximal function maps H\"older continuous function to H\"older continuous function and in particular Lipschitz function to Lipschitz function (cf. \citet{Buckley1999}).
Let $g=|f|^r$. We choose $r$ small enough so that $|g|\lesssim |x|^{-m}$ and $m<n-1$. Fix an $x\in Q$ and let $\tilde{Q}=(y-x)+Q$. Then $y\in\tilde{Q}$ if and only if $x\in Q$, and for any $Q$ containing $x$
\begin{align*}
\fint_Q g dz\le & \fint_Q g dz - \fint_{\tilde{Q}} g dz + \fint_{\tilde{Q}} g dz
\\
\le &\ \underset{y\in\tilde{Q}}{\sup} \left|\fint_Q g dz - \fint_{\tilde{Q}} g dz\right| + \underset{y\in\tilde{Q}}{\sup} \fint_{\tilde{Q}} g dz
\\
=&\ \underset{x\in Q}{\sup} \left|\fint_Q g dz - \fint_{\tilde{Q}} g dz\right| + \underset{y\in Q}{\sup} \fint_{Q} g dz.
\end{align*}
Consequently
\begin{align*}
|M(g)(x)-M(g)(y)|\le \ \underset{x\in Q}{\sup} \left|\fint_Q g dz - \fint_{\tilde{Q}} g dz\right|.
\end{align*}
Now fix an $x\neq0$ and suppose $y$ is close enough to $x$. Let $Q_0$ be the cube centered at 0 with side length $|x|/4$. If $Q\cap (2Q_0)=\emptyset$, then for $y$ close enough to $x$
\begin{align*}
\left|\fint_Q g dz - \fint_{\tilde{Q}} g dz\right|\big/|x-y| \le \fint_Q\frac{\left|g(z+y-x)-g(z)\right|}{|x-y|}dz \le \fint_Q |\nabla g(\xi_{z,x,y})|dz\lesssim |x|^{-\ell}.
\end{align*}
If $Q\cap (2Q_0)\neq\emptyset$, then $\textrm{diam}(Q)\ge |x|/2$ (the diameter of $Q$) and
\begin{align*}
\left|\fint_Q g dz - \fint_{\tilde{Q}} g dz\right|\big/|x-y| \le (|Q||x-y|)^{-1}\left(\int_{(Q\Delta\tilde{Q})\cap Q_0}g dz+\int_{(Q\Delta\tilde{Q})\cap Q_0^c}g dz\right).
\end{align*}
For the first part of the right hand side, we have estimates
\begin{align*}
(|Q||x-y|)^{-1}\int_{(Q\Delta\tilde{Q})\cap Q_0^c}g dz\lesssim &\ |x|^{-1} \fint_{(Q\Delta\tilde{Q})\cap Q_0^c}g dz\lesssim |x|^{-1}|x|^{-(\ell-1)r}\lesssim |x|^{-(\ell-1)r-1}.
\end{align*}
For the second part we have
\begin{align*}
(|Q||x-y|)^{-1}\int_{(Q\Delta\tilde{Q})\cap Q_0}g dz \lesssim &\ |x|^{-1} \fint_{(Q\Delta\tilde{Q})\cap (2Q_0)}g dz\lesssim |x|^{-1}\fint_{(Q\Delta\tilde{Q})\cap (2Q_0)}|z|^{-m} dz.
\end{align*}
Note that the maximal average of $|z|^{-m}$ takes place when $Q\Delta\tilde{Q}$ contains the origin, in which case $(Q\Delta\tilde{Q})\cap (2Q_0)$ is a flat cube inside $2Q_0$ whose height is at most $|x-y|$. The largest possible average is attained when $y\to x$, i.e. the flat cube shrinks to an area in $\bR^{n-1}$. So
\begin{align*}
(|Q||x-y|)^{-1}\int_{(Q\Delta\tilde{Q})\cap Q_0}g dz &\lesssim |x|^{-1}\fint_{\left(2Q_0\right)\cap\bR^{n-1}}|\tilde{z}|^{-m}d\tilde{z}
\\
&\lesssim \ |x|^{-1}|x|^{1-n}|x|^{-m+n-1}\lesssim |x|^{-m-1}.
\end{align*}
To sum up, for some fixed $k$ and for $y$ close enough to $x$, we have
\begin{align*}
\left|\fint_Q g dz - \fint_{\tilde{Q}} g dz\right|\big/|x-y| \lesssim |x|^{-k}.
\end{align*}
Combining the two cases yields
\begin{align*}
|\nabla M(g)(x)|\lesssim\underset{\textrm{ all directions} \ y}{\sup}\ \lim_{y\to x}|M(g)(x)-M(g)(y)|\big/|x-y|\lesssim |x|^{-k_1}.
\end{align*}
Note that the second assumption implies $M(g)\lesssim |x|^{-\ell'}$ for some $\ell'$. Consequently, \begin{align*}
|\nabla M_r(f)(x)|\lesssim |x|^{-k_2}\quad\textrm{for some }k_2,
\end{align*}
finishing the argument.
\end{proof}

\bigskip

\begin{lemma}\label{le:WBMO-tDym}
Suppose a family of differentiable functions $\{f_t\}_{0<t<T}$ is compactly supported in $B(0,1)$ (open ball with radius 1 centered at 0) and
\begin{enumerate}
\item $|f_t(x)|\gtrsim\displaystyle\frac{1}{|x|^{\epsilon_1}+(T-t)^{\epsilon_2}} $ in $B(0,\frac{1}{2})$ for some $\epsilon_1,\epsilon_2>0$
\item $\underset{t}{\sup}\ |\nabla f_t(x)|\lesssim \displaystyle\frac{1}{|x|^{\ell_1}}$ for some $\ell_1>0$
\item $\underset{x}{\sup}\ |\nabla f_t(x)|\lesssim \displaystyle\frac{1}{(T-t)^{\ell_2}}$ for some $\ell_2>0$.
\end{enumerate}
Set
$\phi_*(x)=\ln^{(k)}\left(|\ln x|\right)=\log\log\ldots\log|\log x|$ (for some $k \in \mathbb{N}$)
and $\phi(x)=-x\phi_*'(x)$ for small $x$.
Then, for a suitable $r\in(0,1)$, $\underset{t}{\sup} \ \|\phi_*(M_r f_t)\|_{\widetilde{bmo}_\phi}\lesssim 1$.
\end{lemma}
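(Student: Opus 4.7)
Write $g_t = M_r f_t$ and $h_t = \phi_* \circ g_t$. The plan is to transfer regularity from $\{f_t\}$ to $\{g_t\}$ via Lemma~\ref{le:Diff-MaxFun}, then bound the weighted oscillations of $h_t$ by a Poincar\'e-type argument away from the origin and by direct estimation near it. First I would invoke Lemma~\ref{le:Diff-MaxFun} (with $\ell=\ell_1$ and $r \in (0,1)$ chosen sufficiently small) to obtain $|\nabla g_t(x)|\lesssim |x|^{-k}$ uniformly in $t$; the lower bound on $|f_t|$ needed in that lemma is available on $B(0,1/2)$ from hypothesis (1), and outside one can replace $f_t$ by $f_t+c$ for a small positive cutoff-valued constant, since the maximal function of a truncated function differs by an irrelevant error. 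Hypothesis (3) combined with the Lipschitz-preservation of the maximal function in turn yields the (coarser but $t$-aware) bound $\|\nabla g_t\|_\infty\lesssim (T-t)^{-\ell_2}$, and the pointwise domination $g_t\geq |f_t|$ gives the lower bound $g_t(x)\gtrsim (|x|^{\epsilon_1}+(T-t)^{\epsilon_2})^{-1}$ on $B(0,1/2)$.

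Using $\phi_*'(s)=-\phi(s)/s$, the chain rule yields
\[
 |\nabla h_t(x)| = \frac{\phi(g_t(x))}{g_t(x)}\,|\nabla g_t(x)|,
\]
and the $L^1$ bound on $h_t$ follows from the compact support of $f_t$ together with the sublinear (iterated-logarithm) growth of $\phi_*$. For the oscillation part, fix a small cube $Q=Q(x_0,s)$ and split into two regimes. In the far-from-origin regime $|x_0|\geq 4s$, a local Poincar\'e inequality bounds $\fint_Q |h_t-(h_t)_Q|\,dx$ by $s\fint_Q |\nabla h_t|\,dx$; plugging in the estimates above and using that $|x|\sim|x_0|$ on $Q$ produces a bound proportional to $s|x_0|^{-k}\phi(g_t(x_0))(|x_0|^{\epsilon_1}+(T-t)^{\epsilon_2})$, and the crucial observation is that $\phi$ is the log-derivative of an iterated logarithm, so polynomial perturbations of its argument become additive constants inside the outer logs: $\phi(g_t(x_0))$ is comparable to $\phi(|x_0|^{\epsilon_1}+(T-t)^{\epsilon_2})$, and the ratio with $\phi(s)$ is $O(1)$ once one observes the polynomial $|x_0|^{-k}$ is dominated by $s/\phi(s)$ for the cube sizes in play. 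In the near-origin regime $|x_0|<4s$, Poincar\'e is too weak because $g_t$ varies rapidly; instead one uses monotonicity of $\phi_*$ to get
\[
 \fint_Q |h_t-(h_t)_Q|\,dx \leq 2\bigl(\phi_*(\inf_Q g_t) - \phi_*(\sup_Q g_t)\bigr) = 2\int_{\inf_Q g_t}^{\sup_Q g_t} \frac{\phi(u)}{u}\,du,
\]
and the lower bound $\inf_Q g_t\gtrsim (s^{\epsilon_1}+(T-t)^{\epsilon_2})^{-1}$ combined with the slow variation of the iterated-logarithm integrand reduces this integral to $O(\phi(s))$.

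The main obstacle is the near-origin regime (Case B): controlling the oscillation uniformly as $t\to T$ when $g_t$ itself blows up at $0$. This is precisely where the algebraic/polynomial nature of the blow-up profile encoded in $\epsilon_1,\epsilon_2$ interacts delicately with the iterated-logarithm structure of $\phi_*$: polynomial factors in $s$ and $(T-t)$ have to become additive constants sitting inside the outermost logarithm of $\phi_*$, and the bookkeeping that confirms this — essentially showing that $\log^{(k)}|\log(a^{-\alpha}+b^{-\beta})|$ differs from $\log^{(k)}|\log\min(a,b)|$ by an $O(1)$ amount and then estimating the integral of $\phi(u)/u$ across the range of $g_t$ on $Q$ — is the heart of the proof. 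Everything else (verifying the hypotheses of Lemma~\ref{le:Diff-MaxFun}, the $L^1$ bound, and the far-from-origin estimate) is essentially routine given the gradient bounds from Step~1.
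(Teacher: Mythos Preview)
Your outline has the right architecture (transfer gradient bounds to $g_t=M_r f_t$ via Lemma~\ref{le:Diff-MaxFun}, then split the oscillation estimate by proximity to the singularity), but the near-origin estimate as written does not close. Bounding the oscillation of $h_t=\phi_*\circ g_t$ on $Q$ by $\phi_*(\sup_Q g_t)-\phi_*(\inf_Q g_t)$ is too coarse: even under the best polynomial control of both endpoints, say $\inf_Q g_t \sim s^{-c_2}$ and $\sup_Q g_t \sim s^{-c_1}$ with $c_1>c_2>0$, for $k=1$ you get $\ln\ln(s^{-c_1})-\ln\ln(s^{-c_2})=\ln(c_1/c_2)$, a fixed positive constant, whereas the target is $O(\phi(s))=O(1/\ln(1/s))\to 0$. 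The ``slow variation of the iterated-logarithm integrand'' cannot turn an $O(1)$ discrepancy at the level of $\ln g_t$ into an $O(\phi(s))$ bound. What is actually needed is the Coifman--Rochberg theorem: $\ln M_r f_t\in BMO$ with a \emph{universal} constant, so $\fint_Q \ln g_t - \inf_Q \ln g_t \lesssim 1$ for every $Q$ and $t$. Writing $\phi_*(g_t)=(\phi_*\circ\exp)(\ln g_t)$, Jensen (concavity of $\ln^{(k)}$) plus the mean value theorem bound the mean oscillation of $\phi_*(g_t)$ by $(\phi_*\circ\exp)'(\xi_Q)$ with $\xi_Q\ge \inf_Q\ln g_t$; the polynomial lower bound $\inf_Q g_t\gtrsim |Q|^{-\epsilon_3}$ then places $\xi_Q\gtrsim \epsilon_3\ln(1/|Q|)$, and $(\phi_*\circ\exp)'$ evaluated there is exactly $\lesssim \phi(|Q|)$. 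This Coifman--Rochberg step is the heart of the argument and is absent from your proposal.

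A secondary issue is the far/near threshold. Your Poincar\'e bound in Case~A requires roughly $s\,|x_0|^{-k}\lesssim \phi(s)$, which fails for $|x_0|\sim 4s$ once $k>1$. The paper instead uses a three-way split keyed to $\mathrm{dist}(0,Q)\gtrsim (\mathrm{diam}\,Q)^\gamma$ and $T-t\gtrsim(\mathrm{diam}\,Q)^\delta$ with $\gamma<1/\ell_1$, $\delta<1/\ell_2'$: in either of those two regimes the gradient bounds (\ref{eq:DiffMaxBddx}) or (\ref{eq:DiffMaxBddt}) give $s\cdot\sup_Q|\nabla g_t|\lesssim |Q|^{\mathrm{positive}}\lesssim \phi(|Q|)$, and only when \emph{both} fail does one land in the regime where the lower bound $\inf_Q g_t\gtrsim |Q|^{-\epsilon_3}$ holds and the Coifman--Rochberg argument above applies. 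A linear threshold $|x_0|=4s$ and a two-case split cannot separate these regimes.
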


\begin{proof}
Lemma~\ref{le:Diff-MaxFun} and the second assumption imply
\begin{align}\label{eq:DiffMaxBddx}
\underset{t}{\sup}\ |\nabla M_r f_t|\lesssim \displaystyle\frac{1}{|x|^{\ell_1}}
\end{align}
for some $r\in(0,1)$. In fact $\ell_1$ can be different from the one given in the assumption, but for convenience we still denote it by $\ell_1$. Following a similar argument as in Lemma~\ref{le:Diff-MaxFun}, we deduce that
\begin{align*}
\left|M(|f_t|^r)(x)-M(|f_t|^r)(y)\right|\le \underset{x\in Q}{\sup}\left|\fint_Q |f_t(z)|^rdz- \fint_{\tilde{Q}}|f_t(z)|^rdz\right|
\end{align*}
and that
\begin{align*}
\frac{\left|M(|f_t|^r)(x)-M(|f_t|^r)(y)\right|}{|x-y|}&\le \underset{x\in Q}{\sup}\left|\fint_Q |f_t(z)|^rdz- \fint_{\tilde{Q}}|f_t(z)|^rdz\right|/|x-y|
\\
&\lesssim \underset{x\in Q}{\sup}\fint_Q\frac{\left||f_t|^r(z+y-x)-|f_t|^r(z)\right|}{|x-y|}dz
\\
&\lesssim \underset{x\in Q}{\sup}\fint_Q|\nabla |f_t|^r(\xi_{z,x,y})|dz\ .
\end{align*}
Therefore $|\nabla M(|f_t|^r)(x)|\lesssim \displaystyle\underset{x\in Q}{\sup}\fint_Q|\nabla |f_t|^r(\xi_{z,x,y})|dz$. Having in mind the first assumption implies $\underset{t,x}{\inf}\ |f_t|(x)\gtrsim 1$, we see
\begin{align}\label{eq:MaxDerInTime}
|\nabla M(|f_t|^r)(x)|\lesssim \displaystyle\underset{x\in Q}{\sup}\fint_Q|\nabla |f_t|(\xi_{z,x,y})|dz\ .
\end{align}
The second assumption implies that $\underset{t}{\sup}\ |f_t|\lesssim |x|^{-\ell_1+1}$, so $\underset{x\in\partial B(0,\frac{1}{2})}{\sup}|f_t|\lesssim 1$. Then the third assumption implies that
\begin{align}\label{eq:MaxFuncTimeBdd}
\underset{x\in B(0,\frac{1}{2})}{\sup}M(f_t)\lesssim \frac{1}{(T-t)^{\ell_2}}\ .
\end{align}
Combining \eqref{eq:MaxFuncTimeBdd} and \eqref{eq:MaxDerInTime} with the third assumption we obtain
\begin{align}\label{eq:DiffMaxBddt}
\underset{x}{\sup}\ |\nabla M_r f_t|\lesssim \displaystyle\frac{1}{(T-t)^{\ell_2'}}\quad\textrm{for some }\ell_2'\ .
\end{align}
Now we divide the proof into three cases:

If $\textrm{dist}(0,Q)\gtrsim(\textrm{diam}(Q))^\gamma$ with $\gamma<1/\ell_1$, note that by \eqref{eq:DiffMaxBddx}
\begin{align*}
\fint_Q \phi_*(M_r f_t)dx - \underset{Q}{\inf}\phi_*(M_r f_t)&\lesssim \underset{Q}{\sup}\ \phi_*(M_r f_t)- \underset{Q}{\inf}\phi_*(M_r f_t)
\\
&\lesssim \underset{Q}{\sup}\ |\phi_*'(M_r f_t)\cdot\nabla M_r f_t|\cdot \textrm{diam}(Q)
\\
&\lesssim \phi_*'\left(\underset{Q}{\inf}M_r f_t\right)\cdot\underset{Q}{\sup}\ |\nabla M_r f_t|\cdot \textrm{diam}(Q)\ .
\end{align*}
From the first assumption we know $\underset{Q}{\inf}M_r f_t\gtrsim 1$, so $\phi_*'\left(\underset{Q}{\inf}M_r f_t\right)\lesssim 1$, and
\begin{align*}
\fint_Q \phi_*(M_r f_t)dx - \underset{Q}{\inf}\phi_*(M_r f_t)&\lesssim (\textrm{dist}(0,Q))^{-\ell_1}\cdot\textrm{diam}(Q)
\\
&\lesssim |Q|^{(1-\gamma\ell_1)/n}\lesssim \phi(|Q|)\ .
\end{align*}

If $T-t\gtrsim(\textrm{diam}(Q))^\delta$ with $\delta<1/\ell_2$, similarly, by \eqref{eq:DiffMaxBddt}
\begin{align*}
\fint_Q \phi_*(M_r f_t)dx - \underset{Q}{\inf}\phi_*(M_r f_t)&\lesssim \phi_*'\left(\underset{Q}{\inf}M_r f_t\right)\cdot\underset{Q}{\sup}\ |\nabla M_r f_t|\cdot \textrm{diam}(Q)
\\
&\lesssim (T-t)^{-\ell_2}\textrm{diam}(Q)
\\
&\lesssim |Q|^{(1-\delta\ell_2)/n}\lesssim \phi(|Q|)\ .
\end{align*}
The last inequality follows from the fact that $(1-\delta\ell_2)/n>0$ and $|x|^{(1-\delta\ell_2)/n}\ll \phi(|x|)$ when $|x|$ is sufficiently small.

If $\textrm{dist}(0,Q)\lesssim (\textrm{diam}(Q))^\gamma$ and $T-t\lesssim(\textrm{diam}(Q))^\delta$, we divide the proof into two subcases. Case~(i): if $0\notin Q$, then by the first assumption we have
\begin{align*}
\underset{Q}{\inf}M_r f_t &\ge \underset{Q}{\inf} |f_t|\gtrsim \frac{1}{(\textrm{dist}(0,Q)+\textrm{diam}(Q))^{\epsilon_1}+(T-t)^{\epsilon_2}}
\\
&\gtrsim (\textrm{diam}(Q))^{-\min\{\gamma\epsilon_1,\delta\epsilon_2\}}\gtrsim |Q|^{-\epsilon_3}.
\end{align*}
This implies (by Jensen's inequality and the mean value theorem)
\begin{align*}
\fint_Q \phi_*(M_r f_t)dx - \underset{Q}{\inf}\phi_*(M_r f_t)\lesssim &\ (\phi_*\circ\exp)\left(\fint_Q \ln(M_r f_t)\ dx\right) - (\phi_*\circ\exp)\left(\underset{Q}{\inf}\ln(M_r f_t)\right)
\\
\lesssim &\ (\phi_*\circ\exp)'\left(\xi_Q\right)\left(\fint_Q \ln(M_r f_t)\ dx - \underset{Q}{\inf}\ln(M_r f_t)\right)
\end{align*}
where $\underset{Q}{\inf}\ln(M_r f_t)\le \xi_Q\le \fint_Q \ln(M_r f_t) dx$. Note that $(\phi_*\circ\exp)'$ is decreasing near 0, and that by \citet{Coifman1980} we have $\fint_Q \ln(M_r f_t)\ dx - \underset{Q}{\inf}\ln(M_r f_t)\lesssim 1$; consequently
\begin{align*}
\fint_Q \phi_*(M_r f_t)dx - \underset{Q}{\inf}\phi_*(M_r f_t)\lesssim (\phi_*\circ\exp)'\left(\ln|Q|^{-\epsilon_3}\right)\lesssim \phi(|Q|)\ .
\end{align*}
Case~(ii): if $0\in Q$, then
\begin{align*}
\underset{Q}{\inf}M_r f_t\ge \underset{Q}{\inf} |f_t|\gtrsim \frac{1}{(\textrm{diam}(Q))^{\epsilon_1}+(T-t)^{\epsilon_2}}\gtrsim (\textrm{diam}(Q))^{-\min\{\epsilon_1,\delta\epsilon_2\}}
\end{align*}
which leads back to the proof of Case~(i).
\end{proof}

\section{Main Result}

This section is devoted to our main result which is formulated in the following theorem.

\begin{theorem}\label{th:LogkEst-WBMO}
Let $u$ be a Leray solution to the 3D NSE on $\mathbb{R}^3 \times (0, \infty)$.
Assume that the initial vorticity $\omega_0\in L^1\cap L^2$, and that $T>0$ is the first (possible) blow-up time.
Suppose that
\begin{enumerate}
\item for some choice of the parameters $\epsilon_1,\epsilon_2,\ell_1,\ell_2>0$ and some neighborhood $U$ of 0,
\begin{itemize}
\item $|\omega(x,t)|\gtrsim\displaystyle\frac{1}{|x|^{\epsilon_1}+(T-t)^{\epsilon_2}} $
\item $\underset{t<T}{\sup}\ |\nabla \omega(x,t)|\lesssim \displaystyle\frac{1}{|x|^{\ell_1}}$
\item $\underset{x\in U}{\sup}\ |\nabla \omega(x,t)|\lesssim \displaystyle\frac{1}{(T-t)^{\ell_2}}$
\end{itemize}
(in particular, $\epsilon_1,\epsilon_2$ can be arbitrarily small and $\ell_1,\ell_2$ can be arbitrarily large),
\item setting $\varphi(r)=\left(\ln^{(k)}|\ln r|\right)^{-1}=\left(\ln\ln\ldots\ln|\ln r|\right)^{-1}$ and $\xi(t,x)=\omega(t,x)/|\omega(t,x)|$,
\begin{align}\label{eq:VortexWBMO}
\underset{t\in(0,T)}{\sup}\|(\psi\xi)(t,\cdot)\|_{\widetilde{bmo}_{\varphi}}<\infty.
\end{align}
\end{enumerate}
Then
\begin{align}
\underset{t\in(0,T)}{\sup}\int \psi(x)w(x,t)\Phi(w(x,t))dx<\infty.
\end{align}
where $w=\sqrt{\exp^{(k+1)}(1)+|\omega|^2}$ and $\Phi(r)=\ln^{(k)}(|\ln r|)$.
\end{theorem}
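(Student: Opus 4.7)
The plan is to differentiate $\int \psi w\Phi(w)\,dx$ in time and reduce the estimate to controlling the vortex-stretching contribution via the weighted-$BMO$ toolkit developed in Section~2.

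First, I would derive a pointwise differential inequality for $w = \sqrt{\exp^{(k+1)}(1) + |\omega|^2}$ directly from the vorticity equation: using the identity $w\,w_t = \omega\cdot\omega_t$ and Kato's pointwise inequality $|\nabla w|^2 \le |\nabla \omega|^2$, one obtains
\begin{align*}
\partial_t w + (u\cdot\nabla) w \;\le\; \triangle w + \frac{|\omega|^2}{w}\,(S\xi\cdot\xi),
\end{align*}
where $S$ is the symmetric part of $\nabla u$ and $\xi=\omega/|\omega|$. Testing against $\psi\,G(w)$ with $G(r):=(r\Phi(r))'=\Phi(r)+r\Phi'(r)$ and integrating over $\bR^3$, the left-hand side becomes $\frac{d}{dt}\int \psi w\Phi(w)\,dx$. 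Integration by parts, together with $\operatorname{div} u=0$, converts the transport term into a harmless cutoff commutator $-\int (u\cdot\nabla\psi)\,w\Phi(w)\,dx$, while the diffusion contributes a sign-definite dissipation $-\int \psi G'(w)|\nabla w|^2\,dx\le 0$ and a $\nabla\psi$-boundary remainder. Both $\nabla\psi$-remainders are controllable by the a priori energy estimate ($\omega\in L^2_tL^2_x$ up to $T$) and will sit harmlessly on the right of a Gronwall inequality.

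The nontrivial term is the vortex stretching
\begin{align*}
\cS \;:=\; \int \psi\,G(w)\,\frac{|\omega|^2}{w}\,(S\xi\cdot\xi)\,dx.
\end{align*}
Since $\Phi$ grows extremely slowly, $G(w)\lesssim \Phi(w)$, and $|\omega|/w\le 1$ gives $\cS\lesssim \int \psi\,\Phi(w)\,|\omega|^2\,(S\xi\cdot\xi)\,dx$. Writing $|\omega|^2(S\xi\cdot\xi)=\nabla u:(\omega\otimes\omega)$ and invoking Biot--Savart ($\nabla u=R\omega$ with $R$ a matrix of Calder\'on--Zygmund operators), I would use a local/weighted $H^1$--$BMO$ duality with the $\widetilde{bmo}$-factor $\psi\,\xi_i\xi_j\,\Phi(w)$ paired against an $H^1$-factor built from the div-free quantity $\omega\otimes\omega$ via a Coifman--Lions--Meyer--Semmes type structure. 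To place the $\widetilde{bmo}$-factor, apply Lemma~\ref{le:WBMO-tDym} to a suitable localization of $w$---its lower and derivative bounds propagate from the theorem's hypothesis~(1) on $\omega$---to get $\Phi(M_r w)\in \widetilde{bmo}_\phi$ uniformly in $t$; the pointwise majorization $\Phi(w)\le\Phi(M_r w)$ (valid since $M_r w\ge w$ and $\Phi$ is monotone on $[\sqrt{\exp^{(k+1)}(1)},\infty)$) lets us insert $\Phi(M_r w)$ in place of $\Phi(w)$ on the $\widetilde{bmo}$-side. Lemma~\ref{le:WBMO-Multiplier}, applied to the factors $\psi\xi\in \widetilde{bmo}_\varphi$ (the theorem's hypothesis) and $\Phi(M_r w)\in \widetilde{bmo}_\phi$, then places the product in $\widetilde{bmo}$ and closes the duality. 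The resulting bound
\begin{align*}
\cS \;\le\; A(t)\int \psi\,w\Phi(w)\,dx + B(t),\qquad A,B\in L^1(0,T),
\end{align*}
is closed by Gronwall.

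The main obstacle, as I see it, is the weight bookkeeping in composing Lemmas~\ref{le:WBMO-tDym} and~\ref{le:WBMO-Multiplier}: the weight $\varphi$ in the theorem is a single $\ln^{(k)}|\ln r|$-factor, while the weight $\phi$ output by Lemma~\ref{le:WBMO-tDym} is a product of all the intermediate logarithms, and the antiderivative operation $\phi\mapsto \phi_*$ in Lemma~\ref{le:WBMO-Multiplier} injects one more $\log$-composite; one must verify that these aligned weights really do certify $(\psi\xi)\cdot\Phi(M_r w)\in \widetilde{bmo}$. A secondary technical point is that a bare $H^1$--$BMO$ duality may not suffice at the multi-logarithmic scale built into $\Phi$; one may need the Orlicz duality of $L\ln^{(k+1)}L$ against $\exp^{(k+1)}L$ (or a localized/weighted variant) to set up the pairing cleanly, together with a Coifman--Lions--Meyer--Semmes type local-$H^1$ extraction for the trilinear stretching.
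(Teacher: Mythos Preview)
Your overall architecture matches the paper's, but there is a genuine gap in the vortex-stretching step. You write that the pointwise majorization $\Phi(w)\le\Phi(M_r w)$ ``lets us insert $\Phi(M_r w)$ in place of $\Phi(w)$ on the $\widetilde{bmo}$-side.'' This does not work: $BMO$-type norms measure oscillation, not size, and are \emph{not} monotone under pointwise ordering. A constant has zero $BMO$ norm, while a bounded oscillating function it majorizes can have arbitrarily large (weighted) $BMO$ norm. So knowing $\|\psi\xi\,\Phi(M_r w)\|_{\widetilde{bmo}}<\infty$ tells you nothing about $\|\psi\xi\,\Phi(w)\|_{\widetilde{bmo}}$; indeed the whole point of passing to $M_r w$ is precisely that $\Phi(w)$ itself need not lie in any weighted $bmo$---no Coifman--Rochberg-type estimate is available for it.

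The paper's fix is to make the substitution at the level of the integral, \emph{before} invoking duality:
\[
\int \omega\cdot\nabla u\cdot\psi\xi\,\Phi(w)\,dx
=\int \omega\cdot\nabla u\cdot\psi\xi\,\Phi(M_r w)\,dx
+\int \omega\cdot\nabla u\cdot\psi\xi\,\bigl[\Phi(w)-\Phi(M_r w)\bigr]\,dx.
\]
The first integral is handled exactly as you outline (Div-Curl gives $\omega\cdot\nabla u\in\cH^1$, then Lemma~\ref{le:WBMO-Multiplier} with $f=\psi\xi\in\widetilde{bmo}_\varphi$ and $g=\Phi(M_r w)\in\widetilde{bmo}_\phi$ via Lemma~\ref{le:WBMO-tDym}; the weight bookkeeping you flagged does check out, since $\varphi=(\phi_*)^{-1}$ exactly). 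For the second integral one uses that $\Phi=\ln^{(k)}\circ\ln$ is $1$-Lipschitz in the $\ln$ variable, so
\[
\bigl|\Phi(M_r w)-\Phi(w)\bigr|\le \ln\bigl(M_r w/w\bigr)\le M_r w/w-1,
\]
which converts the error into $\int |\nabla u|\,(M_r w-|\omega|)\,dx$, a priori bounded by H\"older and the $L^2$-boundedness of $M_r$. Two further consequences: the right-hand side is bounded outright by $\int_0^T\!\!\int|\nabla u|^2$, so no Gronwall loop (no $A(t)\int\psi w\Phi(w)$ term) is needed; and your secondary worry about needing an Orlicz duality beyond $\cH^1$--$BMO$ evaporates---ordinary $\cH^1$--$BMO$ suffices once the substitution is done this way.
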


\begin{remark}
Condition $1.$ is a description of the algebraic/polynomial nature of the blow-up profile (this is consistent with the current 3D NS theory)
suitable for our purposes; note that the degree is arbitrary, i.e., there is no restriction on the strength of the singularity. Condition $2.$
is a very mild (the number of $\log$-composites is arbitrary) condition on the oscillation of the vorticity direction allowing for a wide range of discontinuities.
\end{remark}


\begin{proof}
Recall that (cf. \citet{Bradshaw2015})
\begin{align*}
\partial_t w -\Delta w+(u\cdot\nabla)w\le \omega\cdot\nabla u\cdot \frac{\omega}{w}.
\end{align*}
Let $f(w)=\Phi(w)+w\Phi'(w)$. Then
\begin{align*}
\nabla f(w)= f'(w) \nabla w= \left(\Phi'(w)+(w\Phi'(w))'\right)\nabla w,
\end{align*}
and one can show that
$$0\le w\left(\Phi'(w)+(w\Phi'(w))'\right)\le 1$$
for all $w$; consequently, $wf'(w)$ is non-negative and bounded.
Multiplying the initial inequality by $\psi f$ yields
\begin{align*}
\psi f(w)\partial_t w -\psi f(w)\Delta w+\psi f(w)(u\cdot\nabla)w\le \psi f(w)\ \omega\cdot\nabla u\cdot \frac{\omega}{w}.
\end{align*}
Time-derivative:
\begin{align*}
\partial_t(\psi w \Phi(w))=&\psi\ \partial_t(w\Phi(w))
\\
=& \psi\left(\Phi(w)w_t + w\Phi'(w)w_t\right)
\\
=& \psi f(w) w_t.
\end{align*}
Laplacian:
\begin{align*}
-\Delta w \  \psi f(w)=&-\nabla\cdot\left(\psi f(w)\nabla w\right) + \nabla\left(\psi f(w)\right)\cdot \nabla w
\\
=&-\nabla\cdot\left(\psi f(w)\nabla w\right) + f(w)\nabla\psi\cdot \nabla w + \psi \nabla f(w)\cdot \nabla w
\\
=&-\nabla\cdot\left(\psi f(w)\nabla w\right) + f(w)\nabla\psi\cdot \nabla w + \psi f'(w)|\nabla w|^2.
\end{align*}
Note that after taking the integral $-\nabla\cdot\left(\psi f(w)\nabla w\right)$ vanishes, and since
$\psi f'(w)|\nabla w|^2$ is always non-negative, it can be dropped. \\
\\
Advection:
\begin{align*}
(u\cdot \nabla) w \  \psi f(w)=&\psi u\cdot \nabla F(w)= \nabla\cdot \left(\psi uF(w)\right) - \left(u\cdot\nabla \psi\right)F(w)
\end{align*}
where $F$ is an antiderivative of $f$ and $|F(w)|\lesssim |w|^{1+\epsilon}$ for arbitrary small $\epsilon$.
The first term vanishes after taking the integral.\\
\\
Vortex-stretching:
\begin{align*}
\omega\cdot \nabla u\cdot \frac{\omega}{w} \  \psi f(w) =& \omega\cdot \nabla u\cdot \xi \psi f(w) + \omega\cdot \nabla u\cdot \left(\frac{\omega}{w}-\frac{\omega}{|\omega|}\right) \psi f(w)
\\
=& \omega\cdot \nabla u\cdot \xi \psi \Phi(w) + \omega\cdot \nabla u\cdot \xi \psi w\Phi'(w) + \omega\cdot \nabla u\cdot \left(\frac{\omega}{w}-\frac{\omega}{|\omega|}\right) \psi f(w).
\end{align*}

In the spirit of  \cite{Bradshaw2015}, we arrive at
\begin{align*}
I(\tau)\equiv \int \psi(x)w(x,\tau)\Phi(w(x,\tau)) dx\le I(0)+c\int_0^\tau\int_x \omega\cdot \nabla u\cdot \psi\xi \Phi(w)dxdt + J(\tau)
\end{align*}
where
\begin{align*}
J=&-\int_0^\tau\int_x f(w)\nabla\psi\cdot \nabla w\ dxdt +\int_0^\tau\int_x (u\cdot \nabla\psi)F(w) dxdt
\\
&+\int_0^\tau\int_x \omega\cdot \nabla u\cdot \xi \psi w\Phi'(w) dxdt +\int_0^\tau\int_x \omega\cdot \nabla u\cdot \left(\frac{\omega}{w}-\frac{\omega}{|\omega|}\right) \psi f(w) dxdt
\\
=: & \ J_1+J_2+J_3+J_4.
\end{align*}

Note that $|f(w)|\lesssim w^\epsilon$ for arbitrary small $\epsilon$; H\"older inequality
with $1/p+1/q=1$ and \citet[Theorem 3.7]{Lions1996} with $L^q(\mathbb{R}^N\times (0,T))$ $(q<4/3)$ then yields
\begin{align*}
J_1\lesssim & \left(\int_t\int_x |wf'(w)+f(w)|^p\right)^{1/p} \left(\int_t\int_x |\nabla w|^q\right)^{1/q}
\lesssim  \left(\int_t\int_x w\right)^{1/p}<\infty.
\end{align*}

Turning to $J_2$, H\"older and Sobolev inequalities imply
\begin{align*}
J_2\lesssim & \int_0^\tau\|F(w)\nabla \psi\|_2 \|u\|_2 \lesssim\ \underset{t}{\sup}\|u\|_2 \int_0^\tau\|F(w)\nabla \psi\|_2
\\
\lesssim &\int_0^\tau\|F(w)\nabla \psi\|_2\lesssim \int_0^\tau\|\nabla(F(w)\nabla \psi)\|_{\frac{6}{5}}
\\
\lesssim &\int_0^\tau \|F(w)\|_{\frac{6}{5}}+\|f(w)\nabla w\|_{\frac{6}{5}}
\\
\lesssim &\int_0^\tau \|F(w)\|_{\frac{6}{5}}+\int_0^\tau \|f(w)\|_{p'}\|\nabla w\|_{q'}
\end{align*}
where $1/p'+1/q'=5/6$ and $6/5<q'<4/3$.
Note that
\begin{align*}
\|F(w)\|_{\frac{6}{5}}\lesssim \|w\|_2^{\frac{5}{3}} \ \mbox{and} \  \|f(w)\|_{p'}\lesssim \|w\|_1^{1/p'};
\end{align*}
hence, by using the result in \citet{Constantin1990} (also see \citet{Lions1996}),
\begin{align*}
J_2\lesssim & \int_0^\tau \|w\|_2^2+ \left(\underset{t}{\sup}\|w\|_1\right)^{1/p'}\int_0^\tau \|\nabla w\|_{q'}
\lesssim \int_t\int_x |\nabla u|^2+ \int_t\int_x |\nabla w|^{q'}<\infty.
\end{align*}

Regarding $J_3$ and $J_4$, it suffices to observe that
\begin{align}
|\xi w\Phi'(w)|\lesssim 1 \ \mbox{and} \  \left|\frac{\omega}{w}-\frac{\omega}{|\omega|}\right|\lesssim |\omega|^{-2},
\end{align}
respectively.

This completes the \emph{a priori} boundedness of $J$, and we can turn our attention to the remaining (leading order)
vortex-stretching term.

On one hand, by the convexity of $\ln^{(k)}x$ (for large $x$) we deduce
\begin{align*}
\left|\Phi(\cM_r w)-\Phi(w)\right|\le \left|\log(\cM_r w)-\log w\right|=\log(\cM_r w/w)\le \cM_r w/w -1;
\end{align*}
this yields
\begin{align*}
\int_0^\tau\int_x \omega\cdot \nabla u\cdot \psi\xi \left(\Phi(w)-\Phi(\cM_r w)\right)dxdt\le \int_0^\tau\int_x |\nabla u|\left(\cM_r w-\omega\right) dxdt
\end{align*}
which is \emph{a priori} bounded by H\"older inequality and the fact that $\cM_r$ is bounded on $L^2$. On the other hand, $\cH^1-BMO$ duality, Div-Curl Lemma (cf. \citet{Coifman1993}), Lemma~\ref{le:WBMO-Multiplier} and Condition $2.$ (\ref{eq:VortexWBMO}) combine to
\begin{align}
\int_0^\tau\int_x \omega\cdot \nabla u\cdot \psi\xi \Phi(\cM_r w)dxdt\lesssim &\int_0^\tau \|\omega\cdot \nabla u\|_{\cH^1} \|\psi\xi \Phi(\cM_r w)\|_{BMO}\ dt \notag
\\
\lesssim &\int_0^\tau \|\omega\cdot \nabla u\|_{\cH^1} \|\psi\xi \Phi(\cM_r w)\|_{\widetilde{bmo}}\ dt \notag
\\
\lesssim &\int_0^\tau \|\omega\|_2 \|\nabla u\|_2 \left(\|\psi\xi\|_\infty+\|\psi\xi\|_{\widetilde{bmo}_\varphi}\right) \|\Phi(\cM_r w)\|_{\widetilde{bmo}_\phi}\ dt \notag
\\
\lesssim &\ \underset{t\in (0,T)}{\sup}\|\Phi(\cM_r w)\|_{\widetilde{bmo}_\phi}\int_0^\tau\int_x |\nabla u|^2dxdt
\end{align}
where $\phi$ satisfies $\varphi=(\phi_*)^{-1}$ (here, we are referring to the notation of Lemma~\ref{le:WBMO-tDym}).
Consequently, Lemma~\ref{le:WBMO-tDym} paired with Condition $1.$ imply
\begin{align*}
\underset{t\in (0,T)}{\sup}\|\Phi(\cM_r w)\|_{\widetilde{bmo}_\phi}<\infty.
\end{align*}
This completes the proof.
\end{proof}

\begin{remark}
Instead of imposing a restriction on time dynamics of the vorticity near the possible singular time $T$, one can assume that the singular profile at $t=T$ has a somewhat regular \emph{shape}, e.g. an almost radial symmetry plus an almost monotonicity plus an almost convexity,
or satisfying the assumptions of Lemma~\ref{le:WBMO-SingPro} below, and assume that the mean oscillations leading to the blow-up time are not worse than the mean oscillations of the singular profile.
\end{remark}

\begin{appendix}

\section{Appendix}\label{sec:appendix}

\begin{lemma}\label{le:WBMO-SingPro}
Suppose a Lebesgue measurable function $f$ is compactly supported in a neighborhood of 0 and
\begin{enumerate}
\item $|f|\gtrsim |x|^{-\epsilon}$ for some $\epsilon$
\item $|\nabla f|\lesssim |x|^{-\ell}$ for some $\ell$.
\end{enumerate}
Then, $\phi_*(M_r f)\in\widetilde{bmo}_\phi$ where $\phi$ and $\phi_*$ are given in Lemma~\ref{le:WBMO-tDym}.
\end{lemma}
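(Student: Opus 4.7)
The plan is to adapt the proof of Lemma~\ref{le:WBMO-tDym} to this static (time-independent) setting. Since there is no time variable, the three-case analysis of Lemma~\ref{le:WBMO-tDym} collapses into two cases, driven only by the spatial condition on $\nabla f$ and the spatial lower bound on $|f|$.

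The first step would be to invoke Lemma~\ref{le:Diff-MaxFun} applied to the singleton ``family'' $\{f\}$. Hypotheses (1) and (2) are immediate from the assumptions; hypothesis (3) follows because $|f(x)|\gtrsim |x|^{-\epsilon}\gtrsim 1$ throughout a neighborhood of the origin (and $f$ is compactly supported there). This yields, for an appropriately small $r\in(0,1)$,
\[
 |\nabla M_r f(x)|\lesssim |x|^{-\ell_1}
\]
for some (possibly enlarged) exponent $\ell_1$, which serves as the basic pointwise differential estimate on $M_rf$.

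Next, I would fix a small cube $Q$ and pick a parameter $\gamma$ with $0<\gamma<1/\ell_1$. In the far-from-origin case $\mathrm{dist}(0,Q)\gtrsim (\mathrm{diam}\,Q)^\gamma$, one notes $M_rf\gtrsim 1$ on $Q$ so $\phi_*'(M_rf)\lesssim 1$; the mean value theorem combined with the gradient bound above then gives
\[
 \fint_Q \phi_*(M_rf)\,dx-\inf_Q \phi_*(M_rf)\lesssim (\mathrm{dist}(0,Q))^{-\ell_1}\,\mathrm{diam}(Q)\lesssim |Q|^{(1-\gamma\ell_1)/n}\lesssim \phi(|Q|),
\]
exactly as in the first case of Lemma~\ref{le:WBMO-tDym}. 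In the near-origin case $\mathrm{dist}(0,Q)\lesssim (\mathrm{diam}\,Q)^\gamma$, the lower bound $|f|\gtrsim |x|^{-\epsilon}$ forces $M_rf\gtrsim |Q|^{-\epsilon'}$ for some $\epsilon'>0$. I would then run the Coifman--Rochberg argument from \cite{Coifman1980} as in Case (i)--(ii) of Lemma~\ref{le:WBMO-tDym}: first pass to $\ln(M_rf)\in BMO$ so that $\fint_Q \ln(M_rf)\,dx-\inf_Q \ln(M_rf)\lesssim 1$, then use Jensen's inequality and the mean value theorem applied to the composition $\phi_*\circ\exp$ (whose derivative is decreasing near $0$) to conclude
\[
 \fint_Q \phi_*(M_rf)\,dx-\inf_Q \phi_*(M_rf)\lesssim (\phi_*\circ\exp)'\bigl(\ln|Q|^{-\epsilon'}\bigr)\lesssim \phi(|Q|).
\]

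Combining the two cases and taking the supremum over small $Q$ produces the $\widetilde{bmo}_\phi$-bound for $\phi_*(M_rf)$. The main (mild) obstacle is bookkeeping: one must confirm that the exponent $\ell_1$ produced by Lemma~\ref{le:Diff-MaxFun} is compatible with the choice of $\gamma$ so that $(1-\gamma\ell_1)/n>0$ and hence $|Q|^{(1-\gamma\ell_1)/n}\ll\phi(|Q|)$ for small $|Q|$ (using that $\phi(x)=-x\phi_*'(x)$ decays only logarithmically), and that $r$ is small enough for $|f|^r$ to be locally integrable so that $\ln(M_rf)\in BMO$ is applicable. Both are the same technical conditions handled in the proof of Lemma~\ref{le:WBMO-tDym}, so no new difficulty arises.
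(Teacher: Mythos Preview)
Your proposal is correct and follows precisely the approach indicated by the paper, which simply states that the proof is similar to Lemma~\ref{le:WBMO-tDym} via Lemma~\ref{le:Diff-MaxFun}. You have faithfully filled in those details: the time-dependent case analysis of Lemma~\ref{le:WBMO-tDym} collapses to your two spatial cases, and the remaining bookkeeping (choice of $\gamma<1/\ell_1$, smallness of $r$, Coifman--Rochberg input) is handled exactly as there.
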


\begin{proof}
The proof is similar to Lemma~\ref{le:WBMO-tDym}, utilizing Lemma~\ref{le:Diff-MaxFun}.
\end{proof}

\bigskip

The rest of the Appendix is devoted to demonstrating that Condition $1.$ in Theorem~\ref{th:LogkEst-WBMO}
is essentially optimal within the mathematical framework utilized.

\begin{lemma}\label{le:DertoBMO}
Let $f:\mathbb R \to \mathbb C$ be differentiable such that
\begin{enumerate}
\item $f$ is even on $[-1,1]$
\item there exists a constant constant $C>0$ such that for every $y\in [-1,1]$ and $|x|\ge |y|$
\begin{equation}\label{e.convexity}
|f'(x)| \le C|f'(y)|
\end{equation}
\item $|xf'(x)| \lesssim \phi(x)$ for all $x\in [0,1]$.
\end{enumerate}
Then for every interval $I$ of length at most $1$ we have
$$\frac 1 {|I|} \int_I |f-f_I| \lesssim \phi(|I|).$$
In particular, if we assume that $f\in L^1$, then $f\in \widetilde{bmo}_{\phi}$.
\end{lemma}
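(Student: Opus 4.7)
The plan is to establish
\begin{equation*}
\fint_I |f - f_I|\,dx \lesssim \phi(|I|)
\end{equation*}
on every interval $I$ of length $r = |I| \le 1$ (more precisely, for $r$ sufficiently small, which is all that is required by the $\widetilde{bmo}_\phi$ definition in Lemma~\ref{le:WBMO-Multiplier}). The $\widetilde{bmo}_\phi$ conclusion then follows at once by adding $\|f\|_{L^1}$. I will use the elementary reduction $\fint_I |f - f_I| \le 2\fint_I |f - c|$, valid for every constant $c$, and split into two regimes by the distance of $I$ to the origin.

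\emph{Regime A: $I$ is far from $0$.}  Assume $\mathrm{dist}(0, I) \ge r$, and let $a = \min_{t \in I}|t| \ge r$. Condition~(2), combined with evenness (which gives $|f'(-t)| = |f'(t)|$ on $(-1,1)$), implies $|f'(t)| \lesssim |f'(a)|$ throughout $I$, and condition~(3) gives $|f'(a)| \lesssim \phi(a)/a$ when $a \le 1$ (the complementary range $a > 1$ yields an absolute constant bound on $|f'|$, and $r \cdot \mathrm{const} \lesssim \phi(r)$ holds for small $r$ since $r/\phi(r) \to 0$). Consequently
\begin{equation*}
\mathrm{osc}(f, I) \le r\sup_I |f'| \lesssim r\,\phi(a)/a.
\end{equation*}
For the weights $\phi$ arising in Lemma~\ref{le:WBMO-tDym} the map $x \mapsto \phi(x)/x$ is monotone decreasing near $0$, so $a \ge r$ forces $\phi(a)/a \le \phi(r)/r$, and the bound $\lesssim \phi(r)$ follows.

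\emph{Regime B: $I$ is near $0$.}  When $\mathrm{dist}(0, I) < r$, one has $I \subset [-2r, 2r] \subset [-1, 1]$. The evenness of $f$ on $[-1,1]$ lets us unfold:
\begin{equation*}
\int_I |f - c|\,dx \le \int_{-2r}^{2r} |f - c|\,dx = 2\int_0^{2r} |f(x) - c|\,dx
\end{equation*}
for any real $c$. Choosing $c = f(2r)$, the fundamental theorem of calculus followed by Fubini give
\begin{equation*}
\int_0^{2r} |f(x) - f(2r)|\,dx \le \int_0^{2r}\!\!\int_x^{2r} |f'(t)|\,dt\,dx = \int_0^{2r} t\,|f'(t)|\,dt \lesssim \int_0^{2r} \phi(t)\,dt \le 2r\,\phi(2r),
\end{equation*}
using condition~(3) in the penultimate step and monotonicity of $\phi$ in the last. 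Slow variation of the iterated-log weights ($\phi(2r) \lesssim \phi(r)$) then closes the estimate, giving $\fint_I |f - c|\,dx \lesssim \phi(r)$.

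I expect Regime B to be the real obstacle: condition~(3) only controls $|f'(t)|$ by the singular weight $\phi(t)/t$, so any pointwise $\sup$-based bound collapses for intervals adjacent to the origin. The essential role of the evenness hypothesis is to convert an arbitrary interval straddling or hugging $0$ into the one-sided interval $[0, 2r]$; once this is done, the Fubini identity $\int_0^{2r}\int_x^{2r} dt\,dx = \int_0^{2r} t\,dt$ manufactures precisely the factor of $t$ that cancels the singular weight $1/t$ in $|f'|$, reducing the estimate to $\int \phi(t)\,dt$, which is controlled by monotonicity alone. Everything else is standard mean-oscillation bookkeeping.
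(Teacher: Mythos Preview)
Your argument is correct for the iterated-log weights at hand, but it takes a genuinely different route from the paper's and imports two mild extra hypotheses on $\phi$ that the paper's proof avoids.

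The paper does not split by $\mathrm{dist}(0,I)$ versus $|I|$. Instead it first treats every one-sided interval $I=[a_I,b_I]\subset[0,\infty)$ in one stroke: writing
\[
\frac{1}{|I|}\int_I |f-f_I|
\;\le\; \frac{2}{|I|^2}\int_I |f'(t)|\,(t-a_I)(b_I-t)\,dt,
\]
it invokes condition~(2) as $|f'(t)|\lesssim |f'(t-a_I)|$ (since $0\le t-a_I\le t$) to \emph{translate} the interval down to $[0,|I|]$, arriving at $\tfrac{1}{|I|}\int_0^{|I|} t|f'(t)|\,dt$. This is exactly your Fubini computation, but now valid for \emph{every} one-sided $I$, not just those touching the origin. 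Intervals straddling $0$ are folded onto $[0,b_I]$ by evenness, with $b_I\le|I|$, so the final step needs only that $\phi$ is increasing.

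By contrast, your Regime~A uses a pointwise oscillation bound and then the inequality $\phi(a)/a\le\phi(r)/r$, which requires $x\mapsto\phi(x)/x$ to be decreasing; and your Regime~B enlarges $I$ to $[-2r,2r]$, which at the end forces the doubling estimate $\phi(2r)\lesssim\phi(r)$. Both properties hold for $\phi(x)=-x\phi_*'(x)$ with $\phi_*=\ln^{(k)}|\ln\cdot|$, so your proof goes through in the intended application; but the paper's translation trick is cleaner in that it uses condition~(2) once, uniformly, and needs nothing from $\phi$ beyond monotonicity.
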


\begin{proof}
Let $I=[a_I, b_I]$. Consider the case when $I\subset [0,\infty)$ first. Then by the triangle inequality
\begin{align*}
\frac 1 {|I|} \int_I |f(x)-f_I| dx &\le \frac 1 {|I|^2} \int_{I^2} |f(x)-f(y)| dx dy
\\
&=  \frac 2 {|I|^2} \iint_{a_I\le x \le y \le b_I} |f(x)-f(y)| dx dy
\\
&\le  \frac 2 {|I|^2} \iint_{a_I\le x \le y \le b_I} \int_x^y |f'(t)|dt dx dy
\\
&=\frac 2 {|I|^2}  \int_I |f'(t)| (t-a_I) (b_I-t) dt
\\
&\lesssim  \frac 1 {|I|}  \int_I |f'(t-a_I)| (t-a_I)  dt
\end{align*}
(using \eqref{e.convexity}  we have $|f'(t)| \lesssim |f'(t-a_I)|$, while clearly $0\le b_I-t\le |I|$)
\begin{align*}
= \frac 1 {|I|}  \int_0^{|I|} |f'(t)| t  dt \lesssim \frac 1 {|I|}  \int_0^{|I|} \phi(t) |I| dt \le \phi(|I|)
\end{align*}
using the monotonicity of $\phi$.

The case when $I\subset [-\infty,0]$ is entirely similar.

Now consider the remaining case when $a_I<0<b_I$. Without loss of generality we may assume that $|a_I| \le |b_I|$, and it suffices to show that for some constant $c$ we have
$$\frac 1 {|I|} \int_I |f-c| \lesssim \phi(|I|)$$
(This would imply  $(1/|I|) \int_I |f-f_I| =(1/|I|) \int_I |(f-c)-(f-c)_I| \le  (2/|I|) \int_I |f-c| \lesssim \phi(|I|)$.)

Set $c=\frac 1{b_I}\int_0^{b_I} f$. Since $|I|\le 1$, $I\subset [-1,1]$, and $f$ being even on $[-1,1]$ implies
$$\frac 1{|I|} \int_I |f-c| \lesssim \frac 1 {|I|} \int_0^{b_I} |f-c| \lesssim \frac 1 {b_I} \int_0^{b_I} |f-c|$$
which reduces this case to the case $I\subset [0,\infty)$ considered above.
\end{proof}

\bigskip

\begin{lemma}\label{le:BMOtoDer}
Let $f:  (0,1) \to (0,\infty)$  be differentiable, decreasing and integrable near $0$, and such that
for every open interval $I \subset (0,1)$ of sufficiently small length we have
$$\frac 1 {|I|} \int_I |f-f_I| \lesssim \phi(|I|).$$
\begin{enumerate}
 \item Assume that for $x$ sufficiently near $0$ $|xf'(x)| \gtrsim \psi(x)$ where   $\psi$ is essentially increasing nonnegative   on $(0,1]$. Then $\psi(x) \lesssim \phi(4x)$ for $x$ sufficiently near $0$.

 \item Assume that in addition $f$ is also near $0$.  Then $|xf'(x)| \lesssim \phi(x)$ near $0$.
\end{enumerate}
\end{lemma}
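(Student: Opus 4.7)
The plan is to test the mean-oscillation hypothesis against the short dyadic intervals $I_x=[x,2x]$ for small $x>0$ and to convert the bound $\tfrac{1}{|I_x|}\int_{I_x}|f-f_{I_x}|\lesssim\phi(|I_x|)$ into a pointwise statement about $|xf'(x)|$. Monotonicity of $f$ enters twice: first to fix the sign of $f'$ on $I_x$, and second (together with the essential monotonicity of $\psi$ or of $|f'|$) to upgrade a one-point estimate to a uniform estimate across all of $I_x$.

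The workhorse is the elementary inequality
\[
\frac{1}{|I|}\int_I |f - f_I|\,dx \;\geq\; \frac{1}{2|I|^2}\iint_{I\times I}|f(s)-f(t)|\,ds\,dt,
\]
which follows from $|f(s)-f(t)|\le |f(s)-f_I|+|f(t)-f_I|$ and averaging in $t$. Since $f$ is monotone on $(0,1)$, $f'$ has a fixed sign on $I_x$, so $|f(s)-f(t)|=\int_s^t|f'(u)|\,du$, and any uniform lower bound $|f'|\ge m$ on $I_x$ produces $|f(s)-f(t)|\ge m|s-t|$. Combined with $\iint_{I_x\times I_x}|s-t|\,ds\,dt=|I_x|^3/3$ this yields
\[
\frac{1}{|I_x|}\int_{I_x}|f-f_{I_x}|\,dt\;\gtrsim\; m\,x.
\]

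For part (1), the assumption $|tf'(t)|\gtrsim\psi(t)$ together with the essentially-increasing property $\psi(t)\gtrsim\psi(x)$ for $t\ge x$ gives $|f'(t)|\gtrsim\psi(x)/x$ uniformly on $I_x$. Inserting $m\sim\psi(x)/x$ in the displayed bound and invoking the BMO-type hypothesis then forces $\psi(x)\lesssim\phi(|I_x|)=\phi(x)\le\phi(4x)$, the harmless factor $4$ leaving room for a convenient choice of comparable interval. For part (2), under the added hypothesis (which we interpret as ``$f$ is convex near $0$'', so that $|f'|$ is monotone decreasing there), one has $|f'(u)|\ge|f'(2x)|$ for all $u\in I_x$; the same computation then gives $|f'(2x)|\,x\lesssim\phi(x)$, i.e.\ $|yf'(y)|\lesssim\phi(y/2)\lesssim\phi(y)$ after re-indexing.

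The main obstacle, such as it is, is purely one of bookkeeping: the argument rests on the single integral inequality above, and the only delicate point is securing a uniform pointwise lower bound on $|f'|$ across an entire dyadic scale, which is exactly what the two ``essentially monotone'' hypotheses are imposed to guarantee. No Lebesgue-differentiation argument or further harmonic-analytic input appears necessary.
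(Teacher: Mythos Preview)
Your argument is correct and, if anything, slightly cleaner than the paper's. The paper tests the oscillation hypothesis on intervals $I=(0,T)$ anchored at the origin (this is where the integrability of $f$ near $0$ is used), locates $\alpha\in(0,T)$ with $f(\alpha)=f_I$ via the mean value theorem, rewrites
\[
\int_I|f-f_I|=\int_0^\alpha |f'(t)|\,t\,dt+\int_\alpha^T |f'(t)|(T-t)\,dt,
\]
and then splits into the cases $\alpha<T/2$ and $\alpha\ge T/2$ to extract a lower bound $\gtrsim T\psi(T/4)$ (respectively $\gtrsim T^2|f'(T)|$ in part (2), using that convexity makes $|f'|$ decreasing). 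Your approach instead tests on dyadic shells $I_x=[x,2x]$, replaces the MVT step by the symmetric inequality $\tfrac1{|I|}\int_I|f-f_I|\ge\tfrac1{2|I|^2}\iint_{I\times I}|f(s)-f(t)|\,ds\,dt$, and reads off the same conclusion from a uniform pointwise lower bound on $|f'|$ over $I_x$. This sidesteps both the case split on $\alpha$ and the integrability assumption at $0$; your parenthetical about adjusting the interval length to hit $|I|=4x$ is exactly right if one wants $\phi(4x)$ without assuming $\phi$ increasing. The only (cosmetic) residue is that in part~(2) your re-indexing yields $|yf'(y)|\lesssim\phi(y/2)$, which matches $\phi(y)$ once $\phi$ is essentially increasing or doubling---true for all the weights used in the paper, and recoverable exactly by the same interval-length adjustment.
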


\proof
(1)  Let $I=(0,T)$ where $T<1$ is sufficiently small. By the mean value theorem $f(\alpha)=T^{-1} \int_0^T f(x)dx$ for some $\alpha\in (0,T)$, and since $f'(t)<0$ we have
\begin{align*}
T \phi(T) &\gtrsim \int_I  |f(x)-f_I|dx =  \int_0^\alpha  |f(x)-f(\alpha)|dx + \int_\alpha^T  |f(x)-f(\alpha)|dx
\\
&= \int_0^\alpha \int_x^\alpha |f'(t)|dt dx +  \int_\alpha^T \int_\alpha^x |f'(t)|dt dx
\\
&=\int_0^\alpha |f'(t)| \int_0^t dx dt + \int_\alpha^T |f'(t)| \int_t^T dx dt
\\
&=\int_0^\alpha |f'(t)| t dt + \int_\alpha^T |f'(t)| (T-t) dx dt= J_1+J_2.
\end{align*}
If $\alpha< T/2$, the second term can be bounded from below as
\begin{align*}
J_2 \gtrsim \int_{T/2}^{cT/2} |f'(t)| t dt \gtrsim \int_{T/2}^{cT/2} \psi(t)dt \gtrsim_c T\psi(T/2)
\end{align*}
using the essentially increasing property of $\psi$, and if $\alpha>T/2$, the first term as
\begin{align*}
J_1\gtrsim \int_{T/4}^{cT/4} \psi(t)dt \gtrsim T\psi(T/4).
\end{align*}
Consequently,
$$T\phi(T) \gtrsim \int_I |f(x)-f_I|dx  \gtrsim T\psi(T/4)$$
which implies the desired estimate.

(2) We proceed similarly as above and obtain (considering the two cases for $\alpha$)
\begin{align*}
T\phi(T) &\gtrsim \min\left\{\int_0^{T/2} |f'(t)| tdt, \int_{T/2}^T |f'(t)| (T-t)dt\right\}
\\
&\gtrsim T\min\left\{\int_{T/4}^{T/2} |f'(t)| dt, \int_{T/2}^{3T/4} |f'(t)| dt\right\}.
\end{align*}
Utilizing that $f'<0$ and $f'$ increasing near $0$ (because of convexity), we have that $|f'|$ is decreasing near $0$; hence,
for $T$ sufficiently small,
$$T\phi(T) \gtrsim  T^2\min\{|f'(T/2)|, |f'(3T/4)|\}\gtrsim T^2|f'(T)|.$$
Thus $x|f'(x)| \lesssim \phi(x)$ for $x$ near $0$ completing the proof.
\endproof

\bigskip

\begin{theorem}\label{th:NonCoifRoch}
There exists a sequence $\{f_n\}\subset L^p\cap C_c^\infty([-1,1])$ such that
$$\|\phi_*(M(f_n))\|_{\widetilde{bmo}_\phi}\to \infty$$
where $\phi_*(x)=\ln^{(k)}\left(|\ln x|\right)$, $k\ge1$ and $\phi(x)=-x\phi_*'(x)$ for small $x$.
\end{theorem}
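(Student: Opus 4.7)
The plan is to argue by contradiction via the necessary-condition part of Lemma~\ref{le:BMOtoDer}(1), applied to an explicitly constructed sequence $\{f_n\}\subset L^p\cap C_c^\infty([-1,1])$. If $\|\phi_*(M(f_n))\|_{\widetilde{bmo}_\phi}$ were uniformly bounded, then -- provided the construction arranges each $\phi_*(M(f_n))$ to be a positive, differentiable, decreasing function near the origin whose derivative satisfies the essentially-increasing hypothesis -- Lemma~\ref{le:BMOtoDer}(1) would force a uniform pointwise estimate
\[
    |x\,(\phi_*(M(f_n)))'(x)|\;\lesssim\;\phi(4x), \qquad x\text{ small}.
\]
Using the chain rule $(\phi_*(Mf_n))'(x)=\phi_*'(Mf_n(x))(Mf_n)'(x)$ together with the explicit expression $\phi_*'(y)=-(y\ln y\cdots \ln^{(k)}y)^{-1}$ for $y$ large, this translates into a uniform upper bound on the logarithmic derivative of $Mf_n$ at all small scales, and the task reduces to building $f_n$ violating it.

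For the construction, I would use a lacunary sum of smooth bumps. Let $\rho\in C_c^\infty([-1/4,1/4])$ be a fixed nonnegative unit-mass bump, and consider
\[
f_n(x)\;=\;\sum_{j=1}^n H_j\,\rho\!\left(2^{j^2+1}(x-x_j)\right),\qquad x_j=2^{-j^2},
\]
with amplitudes $H_j$ calibrated so that (a) the dominant contribution to $Mf_n(x_j)$ comes from the $j$-th bump (requiring a geometric-type balance with its neighbours) and (b) the transitions of $Mf_n$ between consecutive plateaus are sharp enough to make the logarithmic derivative $|x(Mf_n)'(x)|/Mf_n(x)$ exceed the critical scale at $x\sim 2^{-j^2}$ by a factor $c_n\to\infty$. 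Plugging such a lower bound into the chain-rule identity produces $|x(\phi_*(Mf_n))'(x)|\gtrsim c_n\,\phi(x)$ on a set of positive measure, contradicting the uniform estimate from the assumed BMO bound, and hence yielding $\|\phi_*(M(f_n))\|_{\widetilde{bmo}_\phi}\to\infty$.

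The main obstacle is the structural analysis of $Mf_n$. Since maximal functions enjoy the Hardy-type bound $Mf\lesssim \|f\|_{L^1}/|x|$ outside the support, one cannot produce decay sharper than $1/|x|$ on long intervals; any departure from the critical rate must come from multi-scale interactions between the bumps. The naive ``borderline'' choice $H_j=2^{j^2}$ yields $Mf_n\sim 1/(x-x_j)$ locally, which by the chain-rule computation matches the $\widetilde{bmo}_\phi$-scale exactly and produces no blow-up. Breaking through this borderline requires a more extreme amplitude growth or a more lacunary spacing of centres, calibrated so that the bump interactions are strong enough to sharpen the transition profile but not so strong that neighbouring bumps dominate the maximal function at the centre $x_j$ and erase the transition; constructing and verifying such a balanced configuration is the technical heart of the proof, and the place where the delicate interplay between the weight $\phi$, the amplitudes $H_j$, and the geometry of $\{x_j\}$ is crucial.
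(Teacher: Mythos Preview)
Your overall framework---deduce a pointwise derivative estimate from a hypothetical uniform $\widetilde{bmo}_\phi$ bound via Lemma~\ref{le:BMOtoDer}, then violate it---is the same as the paper's. However, the proposal has a real gap, and the diagnosis of what is needed to close it is pointed in the wrong direction.

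You correctly observe that $Mf\sim A/|x|$ produces, via the chain rule, $|x(\phi_*(Mf))'|\sim (\ln(A/|x|))^{-1}$ (for $k=1$; the higher-$k$ case is analogous), and that this matches the target $\phi(x)\sim(|\ln x|)^{-1}$ when $A$ is a fixed constant. Your conclusion is that one must make the transitions of $Mf$ \emph{sharper} than $1/|x|$, via ``more extreme amplitude growth or a more lacunary spacing.'' This is backwards. The key mechanism in the paper is to keep the $A/|x|$ behavior but to make the mass $A$ \emph{small} relative to the scale: a single smoothed bump of height $s^{-\delta}$ on $[-s,s]$ has mass $A\approx s^{1-\delta}$, so that at $x\approx s$ one has $Mf\approx s^{-\delta}$ and hence $\ln Mf\approx \delta|\ln s|$, yielding
\[
\frac{|x(\phi_*(Mf))'(x)|}{\phi(x)}\;\approx\;\frac{|\ln s|}{\delta|\ln s|}\;=\;\delta^{-1}\to\infty.
\]
Sending $\delta\to 0$ (and $s\to 0$ so that the scale where this occurs is small) gives the blow-up. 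Making the amplitudes larger would push $\ln Mf$ \emph{closer} to $|\ln x|$ and erase the gain.

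Consequently, your lacunary multi-bump construction is an unnecessary complication: the maximal function of a sum of bumps at many scales is hard to analyze (you would need to control interference carefully, as you note), whereas a single bump with two parameters $(s,\delta)$ gives an explicit $Mf$ and a clean $\delta^{-1}$ divergence. The paper then verifies that $h=\phi_*(Mf)$ is monotone and convex on an interval $I\approx(ks,rs)$ so that Lemma~\ref{le:BMOtoDer}(2) applies directly; your plan to use part~(1) instead is also viable, but you would still need an explicit handle on $Mf$ to check the essentially-increasing hypothesis on $\psi$, which your current construction does not provide.
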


\begin{remark}
It is worth noting that the above result fails when $k=0$, i.e., when $\phi_*(x)=|\ln x|$ for small $x$ and--consequently--$\phi = 1$ (this
follows directly from \citet{Coifman1980}). As a matter of fact, this is the main reason that the case $k \ge 1$ is more intricate
than the case $k=0$ considered in  \citet{Bradshaw2015}.
\end{remark}

\begin{proof}
Let us discuss the problem in one dimension (examples for higher dimensions can be obtained by rotating the one-dimensional
examples). Start with the function
\begin{align*}
f(x)=s^{-\delta}\cdot\chi_{[-s,s]}+\ell_1\cdot \chi_{[-s-\epsilon,-s)}+\ell_2\cdot \chi_{(s,s+\epsilon]}+C
\end{align*}
where $s, \delta,\epsilon>0$, $C>1$ and $\ell_1,\ell_2$ are two straight lines connecting the constant pieces in the continuous
way, and mollify the sharp corners at $\pm s, \pm (s+\epsilon)$ (we call the modified function $f$) in a way ensuring that $f\equiv s^{-\delta}+C$ on $[-s,s]$ and
$f\equiv C$ outside $[-\epsilon-s,s+\epsilon]$. Then for $x>s+\epsilon$ (or $x<-s-\epsilon$)
\begin{align*}
M(f)(x)= \frac{A+s^{-\delta}(1-\eta(x)/2)\eta(x)\epsilon}{x+\eta(x)\epsilon}+C
\end{align*}
where $A=\int_{-\epsilon-s}^{s+\epsilon} f\ dx$ and $\eta(x)=\frac{2(x-s)}{x+\sqrt{x^2+2\epsilon(x-s)}}$. Let
\begin{align*}
h=\phi_*(M(f)).
\end{align*}
In what follows, we will show that by sending the parameters $s$ and $\delta$ to $0$ in a suitable fashion, the weighted BMO norm of $h$ tends to infinity. More precisely, we are after the sequences $s_n$ and $\delta_n$ and some intervals $I_n=(ks_n,rs_n)$ (where $1\lesssim k,r\lesssim 1$) such that
\begin{align*}
|I_n|\ \underset{x\in I_n}{\inf}\ |h_n'(x)| / \phi(|I_n|^{-1}) \to \infty.
\end{align*}
Then Lemma~\ref{le:BMOtoDer}--(ii) would imply that $\|h_n\|_{\widetilde{bmo}_\phi}\to\infty$.
In order to apply Lemma~\ref{le:BMOtoDer} we will show that $h_n$ is monotone and
convex on $I_n$ and also analyze the asymptotic behavior of the quantity $|I_n| \underset{x\in I_n}{\inf} |h_n'(x)|/\phi(|I_n|^{-1})$.

A straightforward calculation gives
\begin{align*}
\frac{d}{dx}M(f)=&\ \frac{s^{-\delta}\epsilon(\gamma'-\gamma\gamma')(x+\gamma\epsilon)-(1+\gamma'\epsilon) (A+s^{-\delta}(1-\gamma/2)\gamma\epsilon)}{(x+\gamma \epsilon)^2}
\\
\frac{d^2}{dx^2}M(f)=&\ \epsilon\ F(\gamma,\gamma',\gamma'',x,\epsilon,A,s)+A(x+\gamma\epsilon)^{-4}
\end{align*}
where $\gamma'(x)=2-\frac{2(x-s)\left(1+(x+s)/\sqrt{x^2+2\epsilon(x-s)}\right)}{x+\sqrt{x^2+2\epsilon(x-s)}}$.
When $x$ is comparable to $s$ (i.e. $x\approx ks$), $\gamma'$ and $\gamma''$ are bounded. In addition, for a fixed $s$,
$F$ is bounded when $\epsilon\to0$. Hence, if $x$ is comparable to $s$ and $\epsilon$ is small enough ($\epsilon\ll s$), then
\begin{align*}
M(f)\approx Ax^{-1},\qquad \frac{d}{dx}M(f)\approx -Ax^{-2},\qquad \frac{d^2}{dx^2}M(f)\approx Ax^{-4};
\end{align*}
consequently for sufficiently small $s$ and $x$ ($x$ is comparable to $s$)
\begin{align*}
h' &=\phi_*'(M(f_n))\frac{d}{dx}M(f_n)\approx -\left(\ln Ax^{-1}\right)^{-1}x^{-1} <0,
\\
h'' &=\phi_*'(M(f_n))\frac{d^2}{dx^2}M(f_n) +\phi_*''(M(f_n))\left(\frac{d}{dx}M(f_n)\right)^2
\\
&\approx (\ln Ax^{-1})^{-1} x^{-3}- (\ln Ax^{-1})^{-2} x^{-2}>0.
\end{align*}
Now choose $k,r$ ($1\lesssim k,r\lesssim 1$) such that on $I=(ks,rs)$ the above requirements are satisfied (and Lemma~\ref{le:BMOtoDer} applies). For a fixed $\delta$, sending $s\to0$
\begin{align*}
|I|\ \underset{x\in I}{\inf}\ |h'(x)| / \phi(|I|^{-1})\approx s \left(\ln (s^{1-\delta}s^{-1})\right)^{-1}s^{-1}\big/\left(\ln s^{-1}\right)^{-1}\approx \delta^{-1}.
\end{align*}
Summarizing, for each $n$ we can choose $I_n$ such that $|I_n| \underset{x\in I_n}{\inf} |h_n'(x)|/ \phi(|I_n|^{-1})$ is close to $\delta_n^{-1}$
(with $\delta_n \to 0$) and thus
\begin{align*}
\underset{n}{\sup}\ |I_n| \underset{x\in I_n}{\inf} |h_n'(x)| / \phi(|I_n|^{-1}) =\infty.
\end{align*}
\end{proof}

\begin{corollary}
There exists a sequence $\{f_n\}\subset C_c^\infty([-1,1])$ such that $f_n\to f$ pointwise (also in $L^p$), $\phi_*(M_\delta(f))\in L^p\cap\widetilde{bmo}_\phi$ and $|f|\gtrsim |x|^{-\ell}$ but
$$\|\phi_*(M_\delta(f_n))\|_{\widetilde{bmo}_\phi}\to \infty$$
where $\phi_*(x)=\ln^{(k)}\left(|\ln x|\right)$ ($k\ge1$) and $\phi(x)=-x\phi_*'(x)$ for small $x$.
\end{corollary}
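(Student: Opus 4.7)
The plan is to superimpose the divergent bumps $\tilde h_n$ produced in the proof of Theorem~\ref{th:NonCoifRoch}, translated to a point bounded away from the origin, onto a fixed smooth singular background $g$ with $|g|\gtrsim|x|^{-\ell}$. On a neighborhood of the translated bumps $g$ is bounded, so it contributes only $O(1)$ to $M_\delta f_n$ there and cannot erase the $\widetilde{bmo}_\phi$-blow-up produced by the bumps; at the same time, $g$ itself---taken as the limit $f$---satisfies the requirements on $f$ via Lemma~\ref{le:WBMO-SingPro}, and $f_n\to g$ in $L^p$ provided the bumps are $L^p$-small.

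\textbf{Construction.} Pick $\ell\in(0,1/p)$ and choose $g:\mathbb{R}\to(0,\infty)$ smooth off $0$, compactly supported in $[-1,1]$, equal to $|x|^{-\ell}$ on $[-1/4,1/4]$ and identically $1$ on $[1/2,7/8]$. Lemma~\ref{le:WBMO-SingPro} gives $\phi_*(M_\delta g)\in\widetilde{bmo}_\phi$; the slow growth of $\phi_*$ and local $L^p$-integrability of $M_\delta g$ then give $\phi_*(M_\delta g)\in L^p$. Approximate $g$ in $C_c^\infty$ by $g_n:=\rho_n g$, where $\rho_n$ is a smooth cutoff vanishing on $\{|x|<2^{-n}\}$ and equal to $1$ on $\{|x|>2^{1-n}\}$; then $g_n\to g$ pointwise and in $L^p$ by dominated convergence. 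Let $\tilde h_n$ be the bump from the proof of Theorem~\ref{th:NonCoifRoch} with its constant $C$ subtracted, so that $\tilde h_n\ge 0$, $\tilde h_n\in C_c^\infty$, height $s_n^{-\delta_n}$ on support of length $O(s_n+\epsilon_n)$; choose $\delta_n<1/p$ and $s_n\to 0$ to ensure $\|\tilde h_n\|_{L^p}\to 0$. Translate $h_n(x):=\tilde h_n(x-3/4)$, localized in a window $W_n\subset(5/8,7/8)$ of vanishing length. Set $f_n:=g_n+h_n\in C_c^\infty([-1,1])$ and $f:=g$. By construction $f_n\to f$ pointwise and in $L^p$, $|f|\gtrsim|x|^{-\ell}$, and $\phi_*(M_\delta f)\in L^p\cap\widetilde{bmo}_\phi$.

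\textbf{BMO blow-up and main obstacle.} To see $\|\phi_*(M_\delta f_n)\|_{\widetilde{bmo}_\phi}\to\infty$, localize to $W_n$: by nonnegativity
\begin{equation*}
M_\delta h_n\le M_\delta f_n\le M_\delta g_n+M_\delta h_n,
\end{equation*}
and since $g$ is bounded on a slight enlargement of $(5/8,7/8)$, we have $M_\delta g_n\lesssim 1$ on $W_n$, hence $M_\delta f_n=M_\delta h_n+O(1)$ uniformly on $W_n$. The analysis of $\phi_*(M_\delta h_n)$ near $3/4$ is a translated copy of the proof of Theorem~\ref{th:NonCoifRoch}, producing intervals $I_n\subset W_n$ on which $\phi_*\circ M_\delta h_n$ is monotone and convex and satisfies $|I_n|\inf_{I_n}|(\phi_*\circ M_\delta h_n)'|/\phi(|I_n|)\gtrsim\delta_n^{-1}\to\infty$. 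Since $\phi_*'$ is slowly varying and $M_\delta h_n\to\infty$ on $I_n$, the $O(1)$ additive perturbation from $g_n$ changes $\phi_*'(M_\delta f_n)$ and $(M_\delta f_n)'$ on $I_n$ only by lower-order factors; Lemma~\ref{le:BMOtoDer}(ii) then yields the desired divergence. The main obstacle is the careful bookkeeping required to justify this absorption---specifically, showing that the monotonicity and convexity of $\phi_*\circ M_\delta f_n$ on $I_n$ persist in the presence of $g_n$ and that no spurious critical point of $(M_\delta f_n)'$ is introduced. This hinges on the asymptotic dominance $s_n^{-\delta_n}\gg\|g\|_{L^\infty(W_n)}$ of the bump over the background, on the level of the values as well as the first two derivatives of $M_\delta f_n$ on $I_n$.
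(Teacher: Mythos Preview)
Your approach is correct (with one minor caveat below) but genuinely different from the paper's.

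The paper keeps the bump \emph{and} the singular background concentric at the origin: it mollifies
\[
f_{s,\delta,t}(x)=s^{-\delta}\chi_{[-s,s]}+t^{-\ell}\chi_{[-t,t]}+|x|^{-\ell}\chi_{\mathbb{R}\setminus[-t,t]},
\]
so that on the test intervals $I\approx(ks,rs)$ the maximal function picks up $Ax^{-1}+t^{-\ell}$ with $A\approx s^{1-\delta}$. The added background term $t^{-\ell}$ would kill the oscillation unless $Ax^{-1}\gg t^{-\ell}$; the paper enforces this by the coupled scaling $s=e^{-1/t}$, $\delta=\sqrt{t}$, after which the computation is a verbatim repetition of Theorem~\ref{th:NonCoifRoch}. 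Your construction instead \emph{geometrically separates} the two ingredients: the singular profile sits at $0$, the bumps sit near $3/4$. On the test intervals the background contributes only a bounded constant to $Mf_n$, so no parameter coupling is needed and the Theorem~\ref{th:NonCoifRoch} analysis transfers with $Mf_n=1+Mh_n$ exactly on $I_n$; in particular the monotonicity/convexity worry you flag dissolves, because $(Mf_n)'=(Mh_n)'$, $(Mf_n)''=(Mh_n)''$ there, and $\phi_*'(\cdot+1)/\phi_*'(\cdot)\to1$. What you gain is transparency and the avoidance of the exponential tuning; what the paper's route gains is that the maximal-function formula is literally that of Theorem~\ref{th:NonCoifRoch} plus a constant, so no separate argument about large intervals reaching the origin is needed.

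One caveat: your displayed inequality $M_\delta f_n\le M_\delta g_n+M_\delta h_n$ and the conclusion $M_\delta f_n=M_\delta h_n+O(1)$ are only literally true for $\delta=1$; for $\delta<1$ the operator $M_\delta$ is sublinear only up to a constant $C_\delta>1$, and then the additive $O(1)$ fails (one gets instead $M_\delta f_n=M_\delta h_n\,(1+o(1))$, which still suffices after applying $\phi_*$ but changes the bookkeeping). Since the paper itself carries out these appendix computations with the ordinary maximal function, simply setting $\delta=1$ in your argument removes the issue and makes the ``main obstacle'' you identify essentially trivial.
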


\begin{proof}
As in the previous theorem, it suffices to consider the one-dimensional setting. Consider the mollified version of the function
\begin{align*}
f(x)=s^{-\delta}\cdot\chi_{[-s,s]}+ t^{-\ell}\cdot\chi_{[-t,t]}+|x|^{-\ell}\cdot\chi_{\bR\setminus [-t,t]},\qquad t\gg s
\end{align*}
(and call it $f$) such that for $s+\epsilon<x<t$
\begin{align*}
M(f)= \frac{A+s^{-\delta}(1-\gamma(x)/2)\gamma(x)\epsilon}{x+\gamma(x)\epsilon}+t^{-\ell}
\end{align*}
where $\gamma$ is as in Theorem~\ref{th:NonCoifRoch} and $A\approx s^{1-\delta}$. The rest of the argument is very similar to that of Theorem~\ref{th:NonCoifRoch}, except that when sending $s$ and $\delta$ to 0 we let $s$ decrease much faster compared to $\delta$ because we want $Ax^{-1}$ to dominate $t^{-\ell}$. A suitable choice is given by $s=e^{-1/t}$ and $\delta=\sqrt{t}$. Then, if $x$ is comparable to $s$ (i.e. $x\approx ks$) we have
\begin{align*}
Ax^{-1}/ t^{-\ell}\approx t^{\ell}s^{-\delta}\approx t^{\ell}e^{1/\sqrt{t}}\to \infty
\end{align*}
and the same idea as in Theorem~\ref{th:NonCoifRoch} applies.
\end{proof}

\bigskip

\begin{corollary}
For any $\ell> n$, there exists a (infinite) collection $\{f_t\}\subset C_c^\infty([-1,1])$ such that
\begin{align*}
\|f_t\|_{L^1}\lesssim 1,\qquad |\nabla f_t|\lesssim |x|^{-\ell}
\end{align*}
but $\|\phi_*(M(f_t))\|_{\widetilde{bmo}_\phi}\to\infty$.
\end{corollary}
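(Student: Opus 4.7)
The strategy is to reuse the bump construction from Theorem~\ref{th:NonCoifRoch} and merely re-tune the scale parameters so that the two new uniform constraints, $\|f_t\|_{L^1}\lesssim 1$ and $|\nabla f_t|\lesssim |x|^{-\ell}$, are in force. All the BMO blow-up machinery is then inherited from that theorem combined with Lemma~\ref{le:BMOtoDer}.

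I would treat the one-dimensional case first. For an index $t>0$, pick parameters $s_t,\delta_t\to 0^+$ and declare
\[
\epsilon_t := s_t^{\ell-\delta_t}.
\]
Let $f_t$ be the mollified version of
\[
s_t^{-\delta_t}\chi_{[-s_t,s_t]} + (\text{affine transitions on } [-s_t-\epsilon_t,-s_t]\cup[s_t,s_t+\epsilon_t]) + C,
\]
further cut off so that $f_t\in C_c^\infty([-1,1])$. The bump contributes $\sim s_t^{1-\delta_t}$ to $\|f_t\|_{L^1}$ and each transition at most $\epsilon_t\cdot s_t^{-\delta_t}=s_t^{\ell-2\delta_t}$; both are $\lesssim 1$ uniformly in $t$, and the tail (needed to make $f_t$ compactly supported rather than globally equal to $C$) can be arranged to have $O(1)$ contribution as well. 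For the gradient, $\nabla f_t$ is supported on the two transition intervals on which $|x|\sim s_t$ and $|\nabla f_t|\sim s_t^{-\delta_t}/\epsilon_t = s_t^{-\ell}\lesssim |x|^{-\ell}$, as required.

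Having secured the two new hypotheses, I would invoke the proof of Theorem~\ref{th:NonCoifRoch} essentially verbatim. The condition $\ell>n\ge 1$ together with $\delta_t\to 0$ guarantees $\ell-\delta_t>1$, hence $\epsilon_t\ll s_t$, which is precisely the regime in which the asymptotics
\[
M(f_t)\approx A_t\,x^{-1},\qquad \tfrac{d}{dx}M(f_t)\approx -A_t\,x^{-2},\qquad \tfrac{d^2}{dx^2}M(f_t)\approx A_t\,x^{-4}
\]
hold for $x$ comparable to $s_t$. Thus $h_t:=\phi_*(M(f_t))$ is decreasing and convex on some interval $I_t=(ks_t,rs_t)$ with $1\lesssim k,r\lesssim 1$, and the same computation as in Theorem~\ref{th:NonCoifRoch} produces
\[
|I_t|\,\inf_{x\in I_t}|h_t'(x)|\big/\phi(|I_t|^{-1})\gtrsim \delta_t^{-1}\to\infty.
\]
An application of Lemma~\ref{le:BMOtoDer}(ii) then yields $\|\phi_*(M(f_t))\|_{\widetilde{bmo}_\phi}\to\infty$.

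For general $n$ I would radialize: set $F_t(x):=f_t(|x|)$ (with the radial profile appropriately mollified near $|x|=0$). Then $|\nabla F_t(x)|=|f_t'(|x|)|\lesssim |x|^{-\ell}$ is retained, and the $L^1$ bound picks up a factor $\sim s_t^{n-1}$ from the spherical shell volume, so that $\|F_t\|_{L^1}\lesssim s_t^{n-\delta_t}+s_t^{n-1}\epsilon_t\,s_t^{-\delta_t}\lesssim 1$ uniformly; the condition $\ell>n$ is what lets one absorb this extra $s_t^{n-1}$ factor while still keeping $\epsilon_t\ll s_t$. The maximal function $M(F_t)$ inherits the one-dimensional structure (its supremum is still attained on almost-centered balls), so the asymptotic formulas above become $M(F_t)\approx A_t/|x|^n$ and the rest of the argument transfers with cosmetic exponent changes.

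The proof should present no genuine obstacle, since the hard work (the blow-up computation for $\|\phi_*(M(f_t))\|_{\widetilde{bmo}_\phi}$ in the presence of the $\ln^{(k)}$-factor) has already been carried out in Theorem~\ref{th:NonCoifRoch}. The only point that deserves care is the simultaneous calibration $\epsilon_t=s_t^{\ell-\delta_t}$, which must be compatible with both the gradient bound and the $\epsilon_t\ll s_t$ requirement of the asymptotic analysis; this is where, and only where, the hypothesis $\ell>n$ enters.
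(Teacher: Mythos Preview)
Your proposal is correct and follows essentially the same approach as the paper: construct a bump of height $s^{-\delta}$ on $[-s,s]$ with a transition of width $\sim s^{\ell-\delta}$ so that the gradient is $\sim s^{-\ell}\sim|x|^{-\ell}$, then invoke the maximal-function asymptotics and Lemma~\ref{le:BMOtoDer} exactly as in Theorem~\ref{th:NonCoifRoch}. The only cosmetic difference is that the paper shapes the transition as $s^{-\delta}-s^{1-\ell}+|x|^{1-\ell}$ (so that $|f'|=(\ell-1)|x|^{-\ell}$ pointwise) rather than taking it affine, but this yields the same transition width $b-s\sim s^{\ell-\delta}$ and the computations coincide.
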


\begin{proof}
Consider a suitably mollified version of
\begin{align*}
f(x)=s^{-\delta}\cdot\chi_{[-s,s]}+\left(s^{-\delta}-s^{1-\ell}+|x|^{1-\ell}\right)\chi_{(s,b]} +\left(s^{-\delta}-s^{1-\ell}+|x|^{1-\ell}\right)\chi_{[-b,-s)}+C
\end{align*}
where $b$ is determined by $s^{-\delta}-s^{-\ell}+b^{-\ell}=0$. Note that if $\ell>1$ and $\delta$ is sufficiently small then $b-s\ll s$ and
$f\equiv C$ outside $[-b,b]$.  Similarly as in Theorem~\ref{th:NonCoifRoch}, for $x>s+b$ (or $x<-s-b$)
\begin{align*}
M(f)\approx Ax^{-1},\qquad \frac{d}{dx}M(f)\approx -Ax^{-2},\qquad \frac{d^2}{dx^2}M(f)\approx Ax^{-4}
\end{align*}
where $A=\int_{-b-s}^{s+b} f\ dx$. Setting $h=\phi_*(M(f))$, one can show $h$ is monotone and convex on some interval $I=(ks,rs)$ ($1\lesssim k,r\lesssim 1$) and
\begin{align*}
|I|\ \underset{x\in I}{\inf}\ |h'(x)| / \phi(|I|^{-1})\approx s \left(\ln \left(s^{1-\delta}s^{-1}\right)\right)^{-1}s^{-1}\big/\left(\ln s^{-1}\right)^{-1}\approx \delta^{-1}.
\end{align*}
Finally, we choose $f_{s(t),\delta(t)}$ with $s(t),\delta(t)\to0$ and $I_{s(t)}$ such that
\begin{align*}
|I_s|\underset{x\in I_s}{\inf} \left|h_{s(t),\delta(t)}'(x)\right| \big/ \phi(|I_s|^{-1}) \approx\delta^{-1}\to\infty.
\end{align*}
Lemma~\ref{le:BMOtoDer} then yields $\|\phi_*(M(f_{s(t),\delta(t)}))\|_{\widetilde{bmo}_\phi}\to\infty$.
In addition, note that--by construction--$\{f_{s,\delta}\}$ is bounded in $L^1$ and its gradients bounded by $|x|^{-\ell}$.

\end{proof}

\begin{theorem}
For any $\alpha<n$ and $0<\alpha<\beta$, there exists $f\in L^p\cap C^1(\bR^n\setminus\{0\})$ such that $|x|^{-\alpha}\lesssim|f|\lesssim |x|^{-\beta}$ but $$\phi_*(M_\delta(f))\notin \widetilde{bmo}_\phi.$$
\end{theorem}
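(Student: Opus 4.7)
The plan is to perturb the baseline pure-power function $|x|^{-\alpha}$ by a sequence of narrow bumps at scales $R_j\to 0$, carefully tuned so that the algebraic bounds $|x|^{-\alpha}\lesssim|f|\lesssim|x|^{-\beta}$ hold throughout and yet each bump forces a divergent contribution to the $\widetilde{bmo}_\phi$ seminorm of $\phi_*(M_\delta f)$. This will follow the spirit of Theorem~\ref{th:NonCoifRoch} and the two preceding corollaries, but with the bump heights calibrated to the algebraic envelope rather than to a constant baseline.

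Fix $\gamma\in(\alpha,\beta)$, scales $R_j=2^{-j}$, bump widths $\rho_j=R_j^{\mu_j}$ with $\mu_j\to\infty$ (e.g.\ $\mu_j=j$), and pairwise disjoint bump centers $x_j\in\bR^n$ with $|x_j|=R_j$. Define
\[
f(x)=\begin{cases}R_j^{-\gamma}, & x\in B(x_j,\rho_j),\ j\ge 1,\\ |x|^{-\alpha}\chi(x), & \text{otherwise},\end{cases}
\]
where $\chi$ is a smooth cutoff equal to $1$ on $B(0,1)$ and supported in $B(0,2)$; mollify near each $\partial B(x_j,\rho_j)$ so that $f\in C^1(\bR^n\setminus\{0\})$, and $f\in L^p$ for any $p<n/\alpha$. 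On each bump $|f|=R_j^{-\gamma}\in[R_j^{-\alpha},R_j^{-\beta}]\approx[|x|^{-\alpha},|x|^{-\beta}]$, and off bumps $|f|=|x|^{-\alpha}$, so the required algebraic envelope holds with absolute constants.

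For the BMO obstruction, the key claim is that on the \emph{good window} $r:=|y-x_j|\in(\rho_j,\rho_j R_j^{(\alpha-\gamma)\delta/n})$ around the $j$-th bump, the ball $B(x_j,r)$ is the dominant test set in $M(|f|^\delta)(y)$: a direct comparison shows that there the bump contribution $(\rho_j/r)^n R_j^{-\gamma\delta}$ strictly exceeds the baseline value $\approx R_j^{-\alpha\delta}$, while contributions from bumps at other scales $R_m$ are negligible by the rapid separation of the $R_m$'s and the polynomial smallness of the $\rho_m$'s. Hence $M_\delta(f)(y)\approx(\rho_j/r)^{n/\delta}R_j^{-\gamma}$ on this window, making $h:=\phi_*(M_\delta f)$ a radial, monotone, convex function of $r$ there, with (by the chain rule, paralleling the derivative calculation in the proof of Theorem~\ref{th:NonCoifRoch})
\[
|\partial_r h(r)|\approx \frac{n/\delta}{r\,\gamma|\ln R_j|\,\ln|\ln R_j|\,\cdots\,\ln^{(k-1)}|\ln R_j|}\approx \frac{C\,\phi(R_j)}{r}.
\]

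Finally, pick a cube $Q_j$ of side $L_j\approx\tfrac{1}{10}\rho_j R_j^{(\alpha-\gamma)\delta/n}$ contained in the good window with $|y-x_j|\approx L_j$ for $y\in Q_j$. Since $h|_{Q_j}$ is essentially a monotone function of a single (radial) coordinate, the elementary one-dimensional estimate gives $(1/|Q_j|)\int_{Q_j}|h-h_{Q_j}|\gtrsim L_j\,\inf_{Q_j}|\partial_r h|\gtrsim \phi(R_j)/(\delta\gamma)$. On the other hand $|\ln|Q_j||\approx n\mu_j|\ln R_j|$, so a direct computation yields $\phi(|Q_j|)\approx\phi(R_j)/(n\mu_j)$; the weighted BMO quantity on $Q_j$ is therefore $\gtrsim n\mu_j/(\delta\gamma)\to\infty$ as $j\to\infty$, proving $\phi_*(M_\delta f)\notin\widetilde{bmo}_\phi$. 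The central technical hurdle is the dominance verification for $M(|f|^\delta)$ on the good window---ruling out all larger or off-center test sets---which is precisely what pins down the quantitative choices $\rho_j=R_j^{\mu_j}$ with $\mu_j\to\infty$ and the rapid spacing $R_j=2^{-j}$; the remaining monotonicity, convexity, and derivative estimates are routine adaptations of the calculations in Theorem~\ref{th:NonCoifRoch}.
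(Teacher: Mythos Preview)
Your construction is essentially the same as the paper's: a baseline $|x|^{-\alpha}$ decorated with narrow bumps at scales tending to zero whose widths shrink like $(\text{scale})^{\mu_j}$ with $\mu_j\to\infty$, followed by the derivative estimate on $\phi_*\circ M_\delta f$ in an annular window around each bump to force the weighted oscillation quotient to diverge. The paper differs only in cosmetic choices---it works in one dimension, places the bumps at the doubly-exponential scales $a_j=2^{-2^j}$ with heights $a_j^{-\beta}$ (rather than your geometric $R_j=2^{-j}$ with heights $R_j^{-\gamma}$), and closes via Lemma~\ref{le:BMOtoDer} instead of your direct radial-monotone lower bound; the dominance verification you flag as the ``central technical hurdle'' is indeed routine with your parameters and is handled just as briefly in the paper.
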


\begin{proof}
For simplicity set $n=1$ and $\delta=1$. Let $f$ be a suitable mollification (on $(0, \infty)$) of
$$x^{-\alpha}\cdot\chi_{\bR\setminus\cup I_i}+\sum_ia_i^{-\beta}\cdot\chi_{I_i}$$
where $I_i=[a_i,b_i]$ are disjoint (e.g. $a_n=2^{-2^n}$) and $|I_i|\approx a_i^{\gamma_i}$ with $\gamma_i\to\infty$ and
$|I_i|\ll (a_i-b_{i+1})$. Then for $b_{i+1}\ll x<a_i$
\begin{align*}
(Mf)(x)\approx\frac{|I_i|a_i^{-\beta}+\int_x^{a_i}t^{-\alpha}dt}{b_i-x}
\end{align*}
(one can show that the average of $f$ on $[x,a_i]$ is increasing when $x\to a_i$ by differentiation).
A direct computation when $a_i-x\approx |I_i|$ then gives
\begin{align*}
\frac{d}{dx}(Mf)(x)&=\frac{-x^{-\alpha}(b_i-x)+(|I_i|a_i^{-\beta}+(1-\alpha)(a_i^{1-\alpha}-x^{1-\alpha}))} {(b_i-x)^2}
\\
&\approx \frac{-a_i^{-\alpha}(a_i-x+|I_i|)+(|I_i|a_i^{-\beta}+a_i^{-\alpha}(a_i-x))}{|I_i|^2} =\frac{a_i^{-\beta}-a_i^{-\alpha}}{|I_i|}.
\end{align*}
Let $g=\phi_*(M(f))$; then (for $a_i-x\approx |I_i|$)
\begin{align*}
|g'|&=|\phi(M(f))||M(f)|^{-1}\left|\frac{d}{dx}(Mf)(x)\right|
\\
&\gtrsim |\phi(a_i^{-\beta})|\cdot a_i^{\beta}\cdot a_i^{-\beta}|I_i|^{-1}\approx |\phi(a_i^{-\beta})||I_i|^{-1}.
\end{align*}
Consequently, for some small interval $J_i$ ($|J_i|\approx |I_i|$) which is close to $a_i$ we have
\begin{align*}
|J_i|\underset{x\in J_i}{\inf}|g'(x)| /\phi(|J_i|^{-1}) &\approx |J_i||\phi(a_i^{-\beta})||I_i|^{-1}/\phi(|J_i|^{-1})
\\
&\approx |\phi(a_i^{-\beta})|/\phi(|I_i|^{-1}) \approx |\phi(a_i^{-\beta})|/\phi(a_i^{-\gamma_i})
\\
&\gtrsim |\ln a_i^{-\gamma_i}|/|\ln a_i^{-\beta}|\approx \gamma_i/\beta\to\infty.
\end{align*}
Lemma~\ref{le:BMOtoDer} then implies $g=\phi_*(M(f))\notin \widetilde{bmo}_\phi$.
\end{proof}

\end{appendix}

\bigskip
\bigskip
\bigskip
\bigskip



\bibliographystyle{plainnat}
\def\cprime{$'$}


\end{document}